\newtheorem{theorem}{Theorem}[section]
\newtheorem{lemma}[theorem]{Lemma}
\newtheorem{prop}[theorem]{Proposition}
\newcommand{\R}{{\mathbb R}}
\newcommand{\N}{{\mathbb N}}
\newcommand{\Z}{{\mathbb Z}}
\newcommand{\indd}{{\mathbh1}}
\newcommand{\E}{\mathbb{E}}
\newcommand{\PP}{\mathbb{P}}
\newcommand{\cal}{\mathcal}
\newcommand{\supp}{\operatorname{supp}}
\newcommand{\patreq}{r}
\newcommand{\ME}{\mathcal{E}^{(N)}_0}
\newcommand{\calQ}{{\cal Q}}
\newcommand{\lan}{\langle}
\newcommand{\ran}{\rangle}
\newcommand{\ve}{\varepsilon}
\newcommand{\ra}{\rightarrow}
\newcommand{\wconv}{\stackrel{w}{\rightarrow}}
\newcommand{\swconv}{\Rightarrow}
\newcommand{\eem}{{\cal M}(E)}
\newcommand{\eemf}{{\cal M}_F(E)}
\newcommand{\mm}{{\cal M}[0,H)}
\newcommand{\mmf}{{\cal M}_F[0,H)}
\newcommand{\mmfs}{{\cal M}_F[0,H^s)}
\newcommand{\mmds}{{\cal M}_D[0,H^s)}
\newcommand{\mmfr}{{\cal M}_F[0,H^r)}
\newcommand{\mmdr}{{\cal M}_D[0,H^r)}
\newcommand{\dspaceh}{{\cal D}_{\cal H}[0,\infty)}
\newcommand{\dspacee}{{\cal D}_{E}[0,\infty)}
\newcommand{\dTspacee}{{\cal D}_{E}[0,T]}
\newcommand{\incspace}{{\cal I}_{\R_+}[0,\infty)}
\newcommand{\ecb}{{\cal C}_b(E)}
\newcommand{\ecc}{{\cal C}_c (E)}
\newcommand{\cb}{{\cal C}_b(\R_+)}
\newcommand{\newf}{\varphi}
\newcommand{\newft}{\varphi(\cdot,t)}
\newcommand{\newfs}{\varphi(\cdot,s)}
\newcommand{\dxnewfs}{\newf_x(\cdot,s)}
\newcommand{\dtnewfs}{\newf_s(\cdot,s)}
\newcommand{\zerof}{{\mathbf0}}
\newcommand{\invrenegs}{\eta_*}
\newcommand{\fmeasns}{\overline{\nu}{}^{(N)}_{*}}
\newcommand{\measns}{\nu^{(N)}_{*}}
\newcommand{\fyns}{\overline{Y}{}^{(N)}_{*}}
\newcommand{\frenegns}{\overline{\eta}{}^{(N)}_{*}}
\newcommand{\renegns}{\eta^{(N)}_{*}}
\newcommand{\yn}{Y^{(N)}}
\newcommand{\fyn}{\overline{Y}{}^{(N)}}
\newcommand{\fxns}{\overline{X}{}^{(N)}_{*}}
\newcommand{\xns}{X^{(N)}_{*}}
\newcommand{\fren}{\overline{\alpha}{}^{(N)}_E}
\newcommand{\ren}{\alpha_E^{(N)}}
\newcommand{\re}{\alpha_E}
\newcommand{\idlen}{I^{(N)}}
\newcommand{\sn}{S^{(N)}}
\newcommand{\qn}{Q^{(N)}}
\newcommand{\dn}{D^{(N)}}
\newcommand{\rn}{R^{(N)}}
\newcommand{\kn}{K^{(N)}}
\newcommand{\fkn}{\overline{K}{}^{(N)}}
\newcommand{\en}{E^{(N)}}
\newcommand{\agen}{a^{(N)}}
\newcommand{\waitn}{w^{(N)}}
\newcommand{\measn}{\nu^{(N)}}
\newcommand{\meas}{\nu}
\newcommand{\renegn}{\eta^{(N)}}
\newcommand{\reneg}{\eta}
\newcommand{\fqn}{\overline{Q}{}^{(N)}}
\newcommand{\fq}{\overline{Q}}
\newcommand{\fe}{\overline{E}}
\newcommand{\fen}{\overline{E}{}^{(N)}}
\newcommand{\fmeasn}{\overline{\nu}{}^{(N)}}
\newcommand{\frenegn}{\overline{\eta}{}^{(N)}}
\newcommand{\fmeas}{\overline{\nu}}
\newcommand{\freneg}{\overline{\eta}}
\newcommand{\flam}{\overline{\lambda}}
\newcommand{\frn}{\overline{R}{}^{(N)}}
\newcommand{\fr}{\overline{R}}
\newcommand{\fntf}{F^{\overline\eta_t}}
\newcommand{\fidlen}{\overline{I}{}^{(N)}}
\newcommand{\xn}{X^{(N)}}
\newcommand{\fxn}{\overline{X}{}^{(N)}}
\newcommand{\fx}{\overline{X}}
\newcommand{\newfk}{\overline{Z}}
\newcommand{\newspace}{{\cal S}_0}
\newcommand{\fk}{\overline{K}}
\newcommand{\invmeas}{\nu_*}
\begin{document}
\begin{frontmatter}

\title{Asymptotic approximations for
stationary distributions of many-server queues with~abandonment}
\runtitle{Stationary distributions of many-server queues}

\begin{aug}
\author[A]{\fnms{Weining} \snm{Kang}\ead[label=e1]{wkang@umbc.edu}} and
\author[B]{\fnms{Kavita} \snm{Ramanan}\thanksref{t1}\ead
[label=e2]{kavita@dam.brown.edu}}
\runauthor{W. N. Kang and K. Ramanan}
\affiliation{University of Maryland, Baltimore County and Brown University}
\address[A]{Department of Mathematics and Statistics\\
University of Maryland, Baltimore County\\
1000 Hilltop Circle\\
Baltimore, Maryland 21250\\
USA\\
\printead{e1}}
\address[B]{Division of Applied Mathematics \\
Brown University \\
Providence, Rhode Island 02912\\
USA\\
\printead{e2}}
\end{aug}

\thankstext{t1}{Supported in part
by NSF Grants CMMI-0728064, CMMI-0928154.}

\received{\smonth{3} \syear{2010}}
\revised{\smonth{8} \syear{2010}}

%
\begin{abstract}
A many-server queueing system is considered in which customers arrive
according to a renewal process and have service and patience times that
are drawn from two independent sequences of independent, identically
distributed random variables. Customers enter service in the order of
arrival and are assumed to abandon the queue if the waiting time in
queue exceeds the patience time. The state of the system with $N$
servers is represented by a four-component process that consists of the
forward recurrence time of the arrival process, a~pair of
measure-valued processes, one that keeps track of the waiting times of
customers in queue and the other that keeps track of the amounts of
time customers present in the system have been in service and a
real-valued process that represents the total number of customers in
the system. Under general assumptions, it is shown that the state
process is a Feller process, admits a stationary distribution and is
ergodic. It is also shown that the associated sequence of scaled
stationary distributions is tight, and that any subsequence converges
to an invariant state for the fluid limit. In particular, this implies
that when the associated fluid limit has a unique invariant state, then
the sequence of stationary distributions converges, as $N \ra\infty$,
to the invariant state. In addition, a simple example is given to illustrate
that, both in the presence and absence of abandonments, the $N \ra
\infty$ and $t \ra\infty$ limits cannot always be interchanged.
\end{abstract}

%
\begin{keyword}[class=AMS]
\kwd[Primary ]{60K25}
\kwd{68M20}
\kwd{90B22}
\kwd[; secondary ]{60F99}.
\end{keyword}
\begin{keyword}
\kwd{Multi-server queues}
\kwd{stationary distribution}
\kwd{ergodicity}
\kwd{measure-valued processes}
\kwd{abandonment}
\kwd{reneging}
\kwd{interchange of limits}
\kwd{mean-field limits}
\kwd{call centers}.
\end{keyword}

\end{frontmatter}

\section{Introduction}
\label{sec-intro}

\subsection{Description}
\label{subs-back}

An $N$-server queueing system is considered in which customers arrive
according to a renewal process, have independent and identically distributed
(i.i.d.) service requirements that are drawn from a general
distribution with
finite mean and also carry i.i.d. patience times that are drawn from
another general distribution.
Customers enter service in the order of arrival as soon as an
idle server is available, service is nonpreemptive, and customers
abandon the queue if the time spent waiting in queue reaches the
patience time. This system is also sometimes referred to as the $GI/GI/N+G$
model. In this work, it is assumed
that the sequences of service requirements and
patience times are mutually independent, and that
the interarrival, service and patience time distributions
have densities.

The state of the $N$-server system is represented by a four component process
$Y^{(N)}$, consisting of the forward recurrence time process associated with
the renewal arrival process, a measure-valued process that keeps track
of the
amounts of time customers currently in service have been in service, another
measure-valued process that encodes the times elapsed since customers have
entered the system (for all customers for which this time has not yet exceeded
their patience times) and a real-valued process that keeps track of the
total number of customers in the system. This infinite-dimensional state
representation was shown in Lemma B.1 of
Kang and Ramanan \cite{kanram08b} to lead to a Markovian description
of the dynamics (with respect to a suitable filtration).
In addition, a fluid limit for this model was also
established in~\cite{kanram08b}, that is, under suitable assumptions,
it was shown that almost surely, $\overline{Y}{}^{(N)}= Y^{(N)}/N$ converges,
as $N \ra\infty$,
to a limit process $\overline{Y}$ which is characterized as the
unique solution to a set of coupled integral equations
(see Definition \ref{def-fleqns}). The process $\overline{Y}$ will be
referred to as the fluid limit.

The present work focuses on obtaining first-order
approximations to the stationary distribution
of $Y^{(N)}$ which is of fundamental interest for the performance
analysis of
many-server queues.
It is first shown that for each $N$,
$Y^{(N)}$ is a Feller, strong Markov process and has a stationary
distribution. Under an additional assumption (Assumption
\ref{assPosF}), uniqueness of the stationary distribution and
ergodicity of each $Y^{(N)}$ is also established.
The main result, Theorem \ref{thm-convstat},
shows that under fairly general assumptions the
sequence of stationary distributions is tight and that
any subsequential limit is an invariant state for the fluid limit.
In particular,
if the fluid limit has a unique invariant state,
this implies that the sequence of
scaled stationary distributions
(indexed by the number of servers $N$) converges, as $N \ra\infty$,
to this unique invariant state.
More generally, this work seeks to illustrate how an infinite-dimensional
Markovian representation of a stochastic network can facilitate the
(first-order) characterization of the associated stationary distributions.
Furthermore, examples are presented
to illustrate several subtleties in the dynamics.
Specifically, it is shown that the presence of a unique invariant
state is not a necessary condition for
the sequence of scaled stationary distributions to have a limit and
that even when such a limit exists,
the $t \ra\infty$ and $N \ra\infty$ limits cannot in general
be interchanged.

\subsection{Motivation and context}

The study of many-server queueing systems with abandonment is motivated by
applications to telephone call centers and (more generally) customer
contact centers.
The incorporation of customer abandonment captures the effect of customers'
impatience, which has
a substantial impact on the performance of the system. For example, customer
abandonment can stabilize a system even when it is overloaded.
A~considerable body of work has been
devoted to the study of various steady-state or stationary performance
measures of many-server
queues, both with and without abandonment.
In the absence of abandonment,
when the interarrival times and service times are exponential, an explicit
expression for the steady state queue length can be found in Bocharov
et al.
\cite{BDPS}. In the discrete-time setting, when the
i.i.d. interarrival and service times are generally distributed,
the classical work of Kiefer and Wolfowitz \cite{KiWo}
(see also Foss \cite{Foss}) establishes the convergence in distribution,
as time goes to infinity, of the waiting time vectors to a stationary
limit.
The generalization to continuous time is dealt with in Asmussen and Foss
\cite{asmfoss93}.
For a many-server queue with stationary renewal arrivals, deterministic
service times and no abandonments, Jelenkovic, Mandelbaum and Mom\v
{c}ilovi\'{c} \cite{jelmanmom}
showed that on the diffusive scale, the scaled stationary waiting
times converge in distribution to the supremum of a Gaussian random
walk with
negative drift.
For a many-server queue with stationary renewal arrivals, a finitely
supported, lattice-valued service time distribution and no abandonments,
in the so-called Halfin--Whitt asymptotic regime where the number of servers
$N$ goes to infinity and the corresponding arrival rate grows as
$N-\beta\sqrt{N}$
for some $\beta>0$,
Gamarnik and
Mom\v{c}ilovi\'{c} \cite{GaMo08} characterized the limit
of the scaled stationary queue length distribution in terms of the stationary
distribution of an explicitly constructed Markov chain and obtained an
explicit expression for the exponential decay rate of the moment generating
function of this limiting stationary distribution.

For many-server queues with abandonment whose interarrival,
service and abandonment distributions are exponential,
Garnett, Mandelbaum and Reiman
\cite{garmanrei02} provide exact calculations of various steady state
performance measures and their approximations in the Halfin--Whitt asymptotic
regime,
both in the case of finite waiting rooms ($M/M/N/B+M$) and infinite
waiting rooms ($M/M/N+M$).
In the case of Poisson arrivals, exponential service distribution and general
abandonment distribution ($M/M/N+G$), explicit formulae for the steady state
distributions of the queue length and virtual waiting time were
obtained by
Baccelli and Hebuterne \cite{bachet81} (see Sections IV and V.2 therein),
whereas several other steady state performance measures and their
approximations in the Halfin--Whitt asymptotic regime were derived by
Mandelbaum and Zeltyn \cite{manzel05}.

In the previously mentioned works on characterization of
stationary distributions of many-server queues, either
the interarrival times and service times are assumed to be
exponential
or it is assumed that the service time distribution is discrete
and has a finite support, and
that there is no abandonment.
However, statistical
analysis of real call centers has shown that both service times and patience
times are typically not exponentially distributed (see Brown et al.
\cite{brownetal} and Mandelbaum and Zeltyn \cite{manzel05}).
In general, it is difficult to derive explicit expressions for the
stationary distributions of many-server queues,
especially in the more realistic case when service times are not exponential
and there is abandonment.
This is also the case for many other classes of stochastic
networks.
To circumvent this problem, a~common approach that is
taken is to identify the long-time limits of the fluid or diffusion
approximations, which are often more tractable, and then use these
limits as
approximations of the stationary distribution of the original system.
Such an approach relies on the premise that the
long-time behavior of the fluid limit can be characterized
and also requires an argument that justifies the
interchange of (the $N \ra\infty$ and $t \ra\infty$)
limits (see, e.g., Gamarnik and Zeevi \cite{gamzee} for an interchange
of limits result in the context of
generalized Jackson networks).
However, we show that this approach may not always be appropriate
for stochastic network models. Indeed, for the case of many-server
queues whose service distributions are not
exponential,
the long-time behavior of the fluid is subtle and difficult
to characterize
in large part due to the complexity in the
dynamics introduced by the coupling of the measure-valued component
of the fluid limit
with the positive real-valued component by the nonidling condition.
Furthermore, as the example we construct
in Section \ref{subs-counteregs} demonstrates,
in general, the order of the $N \ra\infty$ and $t \ra\infty$
limits cannot be interchanged.



Instead we take a different approach to showing
convergence that is more appropriate for mean-field
limits, which involves establishing tightness of the
stationary distributions and showing that any subsequence converges to
an invariant state.
A~more detailed description of the approach is provided in Section
\ref{subs-mainresults} and additional discussion is provided
in Section \ref{subs-counteregs}.
The present work is also related to the work of Whitt \cite{whifluid06} who
analyzed a discrete time version of the model,
proposed a fluid limit model
and made several conjectures on the associated steady-state quantities.
A comparison of our results with those of Whitt \cite{whifluid06}
is also given in Section \ref{subs-mainresults} after the
statement of our main results.


\subsection{Outline}

The outline of the paper is as follows. A precise mathematical
description of
the model is provided in Section \ref{sec-mode}.
Section \ref{subs-mainres} introduces the basic assumptions and states the
main result. The Feller property and the existence of stationary distributions
of the state descriptor are proved in Section \ref{secSD}.
The fluid equations and the invariant manifold are described in Section
\ref{sec-Fluid} and the asymptotics of the stationary distributions is
established in Section \ref{secconv}. Finally, Section \ref{subs-counteregs}
contains a discussion of the positive Harris recurrence and ergodicity
of the
state descriptor, the long time behavior of the fluid limit and
an example that shows that
the ``interchange of limits'' property does not always hold. In the
remainder of this section, we introduce some common notation used in
the paper.

\subsection{Notation and terminology}
The following notation will be used\break throughout the paper.
$\Z$ is the set of integers, $\N$ is the set of positive integers,
$\R$ is the set of real numbers, $\Z_+$ is the set of nonnegative
integers and
$\R_+$ the set of nonnegative real numbers.
For $a, b \in\R$, $a \vee b$ denotes the maximum of $a$ and $b$,
$a \wedge b$ the minimum of $a$ and $b$ and the short-hand $a^+$ is
used for $a \vee0$.
$\indd_B$ denotes the indicator function of the set $B$
[i.e., $\indd_B (x) = 1$ if $x \in B$ and $\indd_B(x) = 0$ otherwise].

\subsubsection{Function and measure spaces}
\label{subsub-funmeas}

Given any metric space $E$, $\ecb$ and $\ecc$ are, respectively,
the space of bounded, continuous functions and
the space of continuous real-valued functions with compact support
defined on
$E$.
The support of a function $\newf$ is denoted by $\supp(\newf)$.
We denote by $\dTspacee$
(resp., $\dspacee$) the space of $E$-valued,
c\`{a}dl\`{a}g functions on $[0,T]$ (resp., $[0,\infty)$)
and we endow this space with the usual Skorokhod $J_1$-topology
\cite{parbook}. When $E$ is Polish then
$\dTspacee$ and $\dspacee$ are also Polish
spaces (see \cite{parbook}).
Let $\incspace$ be the subset of nondecreasing functions
$f \in{\cal D}_{\R_+}[0,\infty)$ with $f(0) = 0$. Given $f\in
\incspace$,
$f^{-1}$ denotes the inverse function of $f$ defined by
%
%
\begin{equation}
\label{def-finv} f^{-1}(y)=\inf\{x\geq0\dvtx f(x)\geq y\}.
\end{equation}

The space of Radon measures on a complete separable metric space $E$,
endowed with the Borel $\sigma$-algebra,
is denoted by $\eem$, while
$\eemf$ is the subspace of finite measures in $\eem$.
Recall that a Radon measure on~$E$ is one that assigns finite measure
to every relatively
compact subset of $E$.
The space $\eemf$ is equipped with the weak topology, that is, a
sequence of measures
$\{\mu_n\}$ in $\eemf$ is said to converge to $\mu$ in the weak
topology (denoted
$\mu_n \wconv\mu$) if and only if for every $\newf\in\ecb$,
%
%
\begin{equation}
\label{w-limit}
\int_{E} \newf(x) \mu_n(dx) \ra\int_E \newf(x) \mu(dx)
\qquad\mbox{as } n \ra\infty.
\end{equation}
As is
well known, $\eemf$, endowed with the weak
topology is a Polish space.
The symbol $\delta_x$ will be used to denote the measure with unit
mass at the point $x$ and,
by some abuse of notation, we will use $\zerof$ to denote the
identically zero
Radon measure on $E$. When $E$ is an interval, say $[0,H)$ for some $H
\in(0,\infty]$,
we will often write ${\cal M}[0,H)$ and ${\cal M}_F[0,H)$
instead of ${\cal M}([0,H))$ and ${\cal M}_F([0,H))$, respectively.
For any $\mu\in{\cal M}_F[0,H)$, we define
%
%
\begin{equation}
\label{def-fmu}
F^\mu(x) \doteq\mu[0,x],\qquad x \in[0,H).
\end{equation}
For any
Borel measurable function $f\dvtx[0,H) \ra\R$ that is integrable
with respect to $\xi\in\mm$, we often use the short-hand notation
\[
\lan f, \xi\ran\doteq\int_{[0,H)} f(x) \xi(dx).
\]
Also, for ease of notation, given $\xi\in\mm$ and an interval $(a,b)
\subset[0,M)$, we will
use $\xi(a,b)$ to denote $\xi((a,b))$.

\subsubsection{Measure-valued stochastic processes}

In this work, we will be interested
in c\`{a}dl\`{a}g ${\cal H}$-valued stochastic processes where ${\cal
H} = \mmf$
for some $H\le\infty$.
These are random elements that are defined on a probability space
$(\Omega, {\cal F}, \PP)$ and take values in $\dspaceh$,
equipped with the Borel $\sigma$-algebra (generated by open sets under the
Skorokhod $J_1$-topology).
A~sequence $\{X_n\}_{n \in\N}$ of c\`{a}dl\`{a}g, ${\cal H}$-valued
processes, with $X_n$ defined on the probability space $(\Omega_n,
{\cal F}_n, \PP_n)$,
is said to converge in distribution
to a c\`{a}dl\`{a}g ${\cal H}$-valued process $X$ defined on $(\Omega,
{\cal F}, \PP)$ if, for every bounded, continuous functional
$\Xi\dvtx\dspaceh\ra\R$, we have
\[
\lim_{n \ra\infty} \E_n[ \Xi(X_n) ] = \E[ \Xi
(X)],
\]
where $\E_n$ and $\E$ are the expectation operators with respect to
the probability measures
$\PP_n$ and $\PP$, respectively.
Convergence in distribution of $X_n$ to $X$ will be denoted by $X_n
\swconv X$.

\section{Description of model and state dynamics}
\label{sec-mode}

In Section \ref{subs-modyn} we describe the basic model, which is sometimes
referred to as the $GI/GI/N+G$ model. In Section \ref{secrepdyn} we introduce
the state descriptor and some auxiliary processes and also describe the state
dynamics. In Section \ref{subs-dyneq} we obtain a convenient representation
formula for expectations of linear functionals of the measure-valued
components of
the state process.
In Section \ref{subsub-filt} we introduce a
filtration with respect to which the state descriptor is an adapted, strong
Markov process.
This model was also considered in \cite{kanram08b}, where a functional
strong law of large numbers limit for the state descriptor was
established as the number of servers and the mean arrival rate both
tend to infinity.

\subsection{Model description and primitive data}
\label{subs-modyn}

Consider a queueing system with $N$ identical servers in which arriving
customers are served in a nonidling, first-come-first-serve (FCFS)
manner, that is, a newly
arriving customer immediately enters service if there are any idle
servers or,
if all servers are busy, then the customer joins the back of
the queue, and the customer at the head of the queue (if one is
present) enters
service as soon as a server becomes free.

It is assumed that customers are impatient and that a customer reneges
from the queue as soon
as the amount of time he or she has waited in the queue reaches his or
her patience time.
Service is nonpreemptive and
customers do not renege once they have entered service.
The patience times of customers are given by an i.i.d. sequence,
$\{\patreq_i, i\in\Z\}$, with common cumulative distribution
function $G^r$ on $[0,\infty]$,
while the service requirements of customers are given by another
i.i.d.
sequence, $\{v_i, i\in\Z\}$, with common cumulative distribution function
$G^s$ on $[0,\infty)$. For \mbox{$i\in\N$}, $\patreq_i$ and $v_i$,
respectively, represent the
patience time and the service requirement of the $i$th customer to
enter the
system after time zero, whereas $\{\patreq_i, i\in-\N\cup\{0\}\}$
and $\{v_i, i\in-\N\cup\{0\}\}$, respectively, represent the
patience times and the
service requirements of customers that arrived prior to time zero (if such
customers exist), ordered according to their arrival times
(prior to time zero). We assume that $G^s$ has density $g^s$ and $G^r$,
restricted to $[0,\infty)$, has density $g^r$, with~$G^r$ possibly
having some mass at infinity.
This implies, in particular, that
$G^r(0+) = G^s(0+) = 0$. We define $h^r = g^r/(1-G^r)$ and $h^s = g^s/(1-G^s)$
to be the corresponding hazard rate functions associated with
$G^r$ and $G^s$. Let\looseness=-1
\begin{eqnarray*}
H^r & \doteq& \sup\{x \in[0,\infty)\dvtx G^r(x) < 1 \}, \\
H^s & \doteq&
\sup\{x \in[0,\infty)\dvtx G^s(x) < 1 \}.
\end{eqnarray*}\looseness=0
The superscript $(N)$ will be used to refer to quantities associated
with the system with $N$ servers.

Let $\en$ denote the cumulative arrival process associated with the
system that has $N$ servers, with $\en(t)$ representing the total
number of customers that arrive into the system in the time interval
$[0,t]$. We assume that $\en$ is a renewal process with a common
interarrival distribution function $F^{(N)}$, which has finite mean and
satisfies $F^{(N)}(0)=0$. Let $\lambda^{(N)}$ be the inverse of the
mean of $F^{(N)}$, that is,
\[
\lambda^{(N)}\int_0^\infty x F^{(N)}(dx) = 1.
\]
The number $\lambda^{(N)}$ represents the long-run average arrival
rate of customers to the system with $N$ servers.
We assume $\en$, the sequence of service requirements
$\{v_j, j \in\Z\}$ and the sequence of patience times
$\{\patreq_j, j \in\Z\}$ are mutually independent. Let $\ren$ be a
c\`{a}dl\`{a}g, real-valued process defined by $\alpha_E^{(N)} (s) \doteq
\alpha_E^{(N)}(0) + s$ if
$E^{(N)} (s) = 0$ and, if $E^{(N)} (s) >0$, then
\[
\alpha_E^{(N)} (s) \doteq s - \sup \bigl\{ u < s\dvtx E^{(N)} (u) <
E^{(N)} (s) \bigr\}.
\]
Observe that the quantity $\alpha_E^{(N)}(s)$ denotes the  time to $s$
since the last arrival,
 and coincides with the backward recurrence time process. Moreover,
 the process
$\alpha_E^{(N)}$ determines the process $E^{(N)}$. Let $\ME$ be an
a.s. finite~$\Z_+$-valued random variable that represents the
number of customers that entered the system prior to time zero.
This random variable does not play an important role in the analysis.
It is
used merely for bookkeeping purposes, to keep track of the indices of customers.

\subsection{State descriptor}
\label{secrepdyn}
A Markovian description of the state of the system with $N$ servers
would require one to keep track of the residual or elapsed patience
times and the residual or elapsed service times of each customer
present in the queue or in service. In order to do this in a succinct
manner, with a common state space for all $N$-server systems, we use
the representation introduced in \cite{kanram08b}.
In this representation, the state of the $N$-server system consists of
the backward
recurrence time $\alpha_E^{(N)}$ of the renewal arrival process,
a nonnegative real-valued process $\xn$, which represents the
total number of customers in system with $N$ servers (including those
in service and those in queue)
and
a pair of measure-valued
processes, the ``age measure'' process, $\measn$,
which encodes the amounts of time
that customers currently receiving service have been in service and the
``potential queue measure'' process, $\renegn$,
which keeps track not only of the waiting times of customers in
queue but also of the potential waiting times (defined to be the
times since entry into system) of every customer (irrespective
of whether the customer has already entered service and possibly
departed the system)
for whom the potential waiting time has not yet exceeded the
patience time.
Thus, the state of the system, denoted by $\yn$, takes the form
%
%
\begin{equation}
\label{disyn}\yn= \bigl(\ren, \xn, \measn, \renegn\bigr).
\end{equation}

Note that $\xn$ and $\renegn$, together, yield the number and waiting
times~of
customers currently in queue. Indeed, for $t\in[0,\infty)$, let $\qn
(t)$ be
the~number of customers waiting in queue
at time $t$. Because the head-of-the-line~customer is the customer in
queue with the
longest waiting time, the quantity
%
%
\begin{equation}\label{def-chi}\qquad
 \chi^{(N)}(t)\doteq
\inf\bigl\{x>0\dvtx\renegn_t[0,x]\geq\qn(t)\bigr\} =
\bigl(F^{\renegn_t}\bigr)^{-1}
\bigl(\qn(t)\bigr)
\end{equation}
represents the waiting time of the head-of-the-line customer in the
queue at
time~$t$. Here, the function $F^{\eta_t^{(N)}}$ and its inverse are as
defined in (\ref{def-fmu}) and (\ref{def-finv}), respectively.
Since this is an FCFS system,
any mass in $\renegn_t$ that lies to the right of $\chi^{(N)}(t)$ represents
a customer that has already entered service by time $t$, and all masses
in $[0,\chi^{(N)}(t)]$ are still in queue.
Therefore, the queue length process $\qn$ can be expressed in terms of
$\chi^{(N)}$ and $\renegn$:
%
%
\begin{equation} \label{qn}
\qn(t)=\renegn_t\bigl[0,\chi^{(N)}(t)\bigr],\qquad t \in[0,\infty),
\end{equation}
and the restriction of $\eta_t^{(N)}$ to $[0,\chi^{(N)}(t)]$ encapsulates
the waiting times of all customers in queue at time $t$. As explained
in Section 2.2 of \cite{kanram08b}, we include in the state the
measure-valued process $\eta^{(N)}$ rather than a measure-valued
process that only keeps track of the waiting times of customers in
queue because the dynamics of the former is easier to analyze.

We note that, due to the nonidling condition, the
queue length process also admits the following alternative representation
in terms of $X^{(N)}$:
\[
\qn(t)=\bigl[\xn(t)-N\bigr]^+.
\]
Moreover, because
%
%
\begin{equation}
\label{def-xn}
\xn= \bigl\lan{\mathbf1}, \measn\bigr\ran+ \qn,
\end{equation}
the nonidling condition is equivalent to
%
%
\begin{equation}
\label{def-nonidling}
N-\bigl\lan{\mathbf1}, \measn\bigr\ran= \bigl[N - \xn\bigr]^+.
\end{equation}

The following auxiliary processes are useful for the evolution
of the system and can be recovered from the state of the system $\yn$
by using equations~(2.9)--(2.11) and (2.14) in \cite{kanram08b}:
\begin{itemize}
\item
the cumulative reneging process $\rn$, where $\rn(t)$ is the
cumulative number
of customers that have reneged from the system in the time interval $[0,t]$;
\item
the cumulative potential reneging process $\sn$, where $\sn(t)$
represents the cumulative number of customers whose potential waiting
times reached their patience times in the interval $[0,t]$;
\item
the cumulative departure process $\dn$, where $\dn(t)$ is the cumulative
number of customers that departed the system after completion of service
in the interval $[0,t]$;
\item
the process $\kn$, where $\kn(t)$ represents the cumulative number of
customers that entered service in the interval $[0,t]$.
\end{itemize}

It is easy to see from (2.16) in \cite{kanram08b} that the following
mass balance for the number of customers
in queue holds:
%
%
\begin{equation}
\label{mass-queue}
\qn(0) + \en= \qn+ \rn+ \kn.
\end{equation}

\subsection{A useful representation formula}
\label{subs-dyneq}

We now establish representation formulae (in Proposition \ref{lemde})
for expectations
of linear functionals of the age and potential queue measure-valued processes.
These are used to establish tightness of the sequence of stationary
distributions in Section \ref{secexistSD}. This representation
formula is similar to that obtained for
the fluid in Theorem 4.1 of \cite{kasram07}. The representation can be
deduced from a result given in Proposition~4.1 of \cite{kanram08b}
which, for convenience,
we first reproduce below as Proposition~\ref{threp-pde}.
\begin{prop}
\label{threp-pde} Let $G$ be the cumulative distribution function of
a~pro\-bability distribution with density $g$ and hazard rate function
$h=g/(1-G)$, let $H\doteq\sup\{x\in[0,\infty)\dvtx G(x)<1\}$. Suppose
$\overline\pi\in{\cal
D}_{\mmf}[0,\infty)$ has the property that for every $L \in[0,H)$
and $T\in
[0,\infty)$, there exists $C(L,T)<\infty$ such that
%
%
\begin{equation}\label{cond-pimeas} \int_0^\infty<\newf(\cdot
,s)h(\cdot
),\qquad\overline\pi_s>ds<C(L,T)\|\newf\|_\infty,
\end{equation}
for every $\newf\in{\cal
C}_c((-\infty,H)\times\R)$ with $\operatorname{supp}(\newf)\subset
[0,L]\times[0,T]$. Then, given any $\overline\pi_0 \in
\mmf$ and $\newfk\in\incspace$, $\overline\pi$ satisfies the
integral equation
%
%
\begin{eqnarray}\label{eq-pipde}\quad
\lan
\newft, \overline\pi_t \ran& = & \lan\newf(\cdot,0),
\overline\pi_0
\ran+ \int_0^t \lan\dtnewfs, \overline\pi_s \ran \,ds +
\int_0^t \lan\dxnewfs, \overline\pi_s \ran \,ds \nonumber\\[-8pt]\\[-8pt]
& &{} - \int_0^t \lan\newfs h(\cdot), \overline\pi_s
\ran \,ds + \int_{[0,t]} \newf(0,s) \,d \newfk(s)
\nonumber
\end{eqnarray}
for every $\newf\in{\cal C}_c((-\infty,H)\times\R)$ and $t \in
[0,\infty)$, if and only if $\overline\pi$ satisfies
%
%
\begin{eqnarray}
\label{eq-pirep}
\int_{[0,M)} f (x) \overline\pi_t (dx) & = & \int_{[0,M )} f(x+t)
\frac{1 - G(x+t)}{1 - G(x)} \overline\pi_0 (dx) \nonumber\\[-8pt]\\[-8pt]
& &{} + \int_{[0,t]} f(t-s) \bigl(1
- G(t-s)\bigr) \,d \newfk(s)\nonumber
\end{eqnarray}
for every $f \in\cb$ and $t\in(0,\infty)$.
\end{prop}

We now use this general result to obtain a useful
representation formula, which can also alternatively be deduced
by taking expectations in the representation
formula provided in Proposition 6.4 of \cite{kasram10}.
%
\begin{prop} \label{lemde} Suppose that $\E[\lan{\mathbf1},\renegn
_0\ran]<\infty$ and $\E[\lan{\mathbf1},\measn_0\ran]<\infty$. Then
for each bounded measurable function $f$ on $\R_+$ and $t\geq0$,
%
%
\begin{eqnarray} \label{dis-key1} \E\bigl[\bigl\lan f, \renegn_t \bigr\ran
\bigr] & =
& \E\biggl[\int_{[0,H^r)}f(x+t)\frac{1-G^r(x+t)}{1-G^r(x)} \renegn
_0(dx)\biggr]
\nonumber\\[-8pt]\\[-8pt]
& &{} + \E\biggl[\int_{[0,t]}f(t-s)\bigl(1-G^r(t-s)\bigr) \,d\en(s)\biggr]
\nonumber
\end{eqnarray}
and
%
%
\begin{eqnarray} \label{dis-key2}
\E\bigl[\bigl\lan f, \measn_t \bigr\ran\bigr] & = &
\E\biggl[\int_{[0,H^s)}f(x+t)\frac{1-G^s(x+t)}{1-G^s(x)} \measn
_0(dx)\biggr]
\nonumber\\[-8pt]\\[-8pt]
& &{} + \E\biggl[\int_{[0,t]} f(t-s)\bigl(1-G^s(t-s)\bigr) \,d\kn(s)\biggr].
\nonumber
\end{eqnarray}
\end{prop}
\begin{pf}
We provide the details of the proof of (\ref{dis-key1})
only, because the proof of (\ref{dis-key2}) is exactly analogous.
Fix $N \in\N$ and define $\overline{\pi} \doteq\E[\eta^{(N)}]$ and
$\overline{Z} \doteq\E[E^{(N)}]$, $G \doteq G^r$ and $h \doteq h^r$.
By Proposition \ref{threp-pde}, in
order to establish~(\ref{dis-key1}) it suffices to show that
(\ref{cond-pimeas}) and (\ref{eq-pipde}) are satisfied with $\overline
{\pi}$ and $\overline{Z}$
defined as above.
However, these are easily deduced from properties established in
\cite{kanram08b}.\vadjust{\goodbreak}
Indeed, by the analog of (5.4) of Proposition 5.1(2)
in \cite{kanram08b}, we know that
%
%
\begin{equation}
\label{rep-eq2}
\E\biggl[ \int_0^T \bigl\lan\newf(\cdot, s) h^r(\cdot),
\renegn_s \bigr\ran \,ds \biggr]
\leq C(L,T)\Vert\newf\Vert_{\infty},
\end{equation}
where $C(L,T) \doteq( \int_0^L h^r(x) \,dx ) \E[\xn
(0) +
\en(T)]$ is finite because of the supposition of the proposition, the
relation $\fx(0)\leq\lan{\mathbf1},\renegn_0\ran+\lan{\mathbf
1},\measn_0\ran$ and the fact that $\en$ is a renewal process with finite
mean.
Thus, (\ref{rep-eq2}) implies~(\ref{cond-pimeas}). On the other hand,
for every $\newf\in{\cal C}_c^1([0,H^r) \times\R_+)$,
(2.28) of Theorem~2.1 of
\cite{kanram08b} implies that for every
$t \in(0,\infty)$,
%
%
\begin{eqnarray}
\label{rep-eq1}
\bigl\lan\newf(\cdot, t), \renegn_t \bigr\ran& = &
\bigl\lan\newf(\cdot, 0), \renegn_0 \bigr\ran+
\int_0^t \bigl\lan\newf_s(\cdot,s) + \newf_x(\cdot, s), \renegn_s
\bigr\ran \,ds
\nonumber\\[-8pt]\\[-8pt]
& &{} - S^{(N)}_{\newf} (t) + \int_{[0,t]} \newf(0,s) \,d\en(s),\nonumber
\end{eqnarray}
and Proposition 5.1(2) of \cite{kanram08b} shows that
\[
M^{(N)}_{\newf,\eta} \doteq S_{\newf}^{N} - \int_0^t \bigl\lan
\newf(\cdot,s) h^r(\cdot),
\renegn_s \bigr\ran \,ds
\]
is a local $\{{\cal F}_t^{(N)}\}$ martingale.
In fact, $M^{(N)}_{\newf,\eta}$ is an
$\{{\cal F}_t^{(N)}\}$-martingale
because
\begin{eqnarray*}\E\Bigl[\sup_{s \in[0,t]} \bigl|M^{(N)}_{\newf
,\eta}(s)\bigr|\Bigr]
&\leq&\E[S_{\newf}^{N}(t)]+\E\biggl[ \int_0^T \bigl\lan
|\newf(\cdot, s)| h^r(\cdot),
\renegn_s \bigr\ran \,ds \biggr] \\ & \leq& \Vert\newf\Vert_{\infty
} \E
\bigl[\en(t)\bigr]+C(L,T)\Vert\newf\Vert_{\infty} < \infty,
\end{eqnarray*}
where the finiteness follows from the assumption that $\en$ is a renewal
process with finite mean.
The relation (\ref{eq-pipde}) then follows on
taking expectations of both sides of (\ref{rep-eq1}) and interchanging
the expectation with integration.
Hence,
the representation (\ref{dis-key1}) follows.
\end{pf}

\subsection{State space and filtration}
\label{subsub-filt}
The total number of customers
in service at time $t$ is given by
\[
\bigl\lan{\mathbf1}, \measn_t \bigr\ran=\measn_t [0,H^s)
\]
and is bounded above by the number of servers $N$. On the other hand,
it is clear (see, e.g., (2.13) of \cite{kanram08b}) that a.s., for
every $t\in[0,\infty)$,
\[
\bigl\lan{\mathbf1}, \renegn_t \bigr\ran= \renegn_t[0,H^r)
\leq\en(t) + \bigl\lan{\mathbf1}, \renegn_0 \bigr\ran\leq
\en(t) + {\cal E}_0^{(N)}<\infty.
\]
Therefore, a.s.,
for every $t\in[0,\infty)$, $\measn_t \in{\cal M}_F [0,H^s)$ and
$\renegn_t \in{\cal M}_F [0,H^r)$.

Let $\mmds$ be the subset of measures in $\mmfs$ that can be
represented as the sum of a finite number of unit Dirac measures in
$[0,H^s)$, that is, measures that take the form $\sum_{i=1}^k \delta
_{x_i}$ for some $k\in\Z_+$ and $x_i\in[0,H^s), i=1,\ldots,k$.
Analogously, let $\mmdr$ be the subset of $\mmfr$ that can be
expressed as the sum of a finite number of unit Dirac measures in
$[0,H^r)$. Also, define
%
%
\begin{eqnarray}\quad
{\cal Y}^{(N)} &\doteq&
\{(\alpha, x, \mu, \pi) \in\R_+\times\Z
_+\times\mmds\times\mmdr:\nonumber\\[-8pt]\\[-8pt]
&&\hspace*{108.3pt}x\leq\lan{\mathbf1}, \mu\ran+\lan{\mathbf1}, \pi\ran, \lan
{\mathbf1}, \mu\ran\leq N\},\nonumber
\end{eqnarray}
where $\R_+$ is endowed with the Euclidean topology $d$, $\Z_+$ is
endowed with the discrete topology $\rho$ and $\mmds$ and $\mmdr$
are both endowed with the topology of weak convergence. The space
${\cal Y}^{(N)}$ is a closed subset of $\R_+\times\Z_+\times\mmfs
\times\mmfr$ and is endowed with the usual product topology. Since
$\R_+\times\Z_+\times\mmfs\times\mmfr$ is a Polish space, the
closed subset ${\cal Y}^{(N)}$ is also a Polish space.
It follows from the representations for $\measn_t$ and $\renegn_t$
given in (2.3) and (2.8) of \cite{kanram08b} that a.s., the state
descrip\-tor~$\yn(t)$ takes values in ${\cal Y}^{(N)}$ for every $t\in
[0,\infty)$.

For $t\in[0,\infty)$, let $\tilde{{\cal F}}_t^{(N)}$ be the
$\sigma$-algebra generated by
\begin{eqnarray*}
&&\bigl\{\ME,\xn(0),\ren(s), \waitn_j(s),
\agen_j(s), s_j^{(N)},\\
&&\hspace*{44.2pt}j \in\bigl\{-\ME+1, \ldots, 0\bigr\}\cup\N, s
\in[0,t]\bigr\},
\end{eqnarray*}
where $s^{(N)} \doteq(s_j^{(N)}, j \in\Z)$ is the ``station
process,'' defined on the same probability space $(\Omega,\cal F,\PP
)$. For each $t \in[0,\infty)$, if customer $j$ has already
entered service by time $t$, then
$s_j^{(N)} (t)$ is equal to the index $i \in\{1, \ldots, N\}$ of the
station at which
customer $j$ receives service and $s_j^{(N)} (t) \doteq
0$ otherwise. Let $\{{\cal F}_t^{(N)}\}$ denote the associated
right-continuous filtration, completed with respect to $\PP$. It is
proved in Appendix A of \cite{kanram08b} that the state descriptor
$\yn$ and the auxiliary processes
$\en$, $\qn$, $\sn$, $\rn$, $\dn$ and $\kn$ are c\`{a}dl\`{a}g
and adapted to the filtration $\{{\cal F}_t^{(N)}\}$.
Moreover, from Lemma B.1 of \cite{kanram08b} it follows that
$\yn$ is a strong Markov process with respect to the filtration $\{
{\cal F}_t^{(N)}\}$.

\section{Assumptions and main results}
\label{subs-mainres}

The main focus of this paper is to
obtain a ``first-order'' approximation for the
stationary distribution of the $N$-server queue, which is accurate
in the limit as the number of servers goes to infinity.

\subsection{Basic assumptions} \label{secass}
We impose the following mild first moment assumption on the patience and
service time distribution functions $G^r$ and $G^s$. Without loss of
generality, we can
normalize the service time distribution so that its mean equals
$1$.
\begin{ass}
\label{as-mean}
The mean patience and service times are finite:
%
%
\begin{equation}
\label{def-mean1}
\theta^r \doteq\int_{[0,\infty)} x g^r(x) \,dx = \int_{[0,\infty
)} \bigl( 1 - G^r(x) \bigr) \,dx <\infty
\end{equation}
and
%
%
\begin{equation}
\label{def-mean2}
\int_{[0,\infty)} x g^s(x) \,dx = \int_{[0,\infty)} \bigl( 1 -
G^s(x) \bigr)
\,dx = 1.
\end{equation}
\end{ass}

Let $\invmeas$ and $\invrenegs$ be the probability measures defined as
follows:
%
%
\begin{eqnarray}
\label{def-invmeas}
\invmeas[0,x) & \doteq& \int_0^x \bigl(1 - G^s(y)\bigr) \,dy,\qquad x \in
[0,H^s), \\
\label{def-invrenegs}
\invrenegs[0,x) & \doteq& \int_0^x \bigl(1 - G^r(y)\bigr) \,dy,\qquad x \in[0,H^r).
\end{eqnarray}
Note that $\invmeas$ and $\invrenegs$ are well defined due to
Assumption \ref{as-mean}.
For $\lambda\geq1$, define the set $B_\lambda$ as follows:
%
%
\begin{equation}
\label{eq-fp}
B_\lambda\doteq\biggl\{x \in[1,\infty): G^r \bigl(
(F^{\lambda\invrenegs}
)^{-1} \bigl((x-1)^+\bigr) \bigr) = \frac{\lambda-
1}{\lambda} \biggr\}.
\end{equation}
Let
\[
b_l^\lambda\doteq\inf\{x\in[1,\infty)\dvtx x\in B_\lambda
\} \quad\mbox{and}\quad b_r^\lambda\doteq\sup\{
x\in[1,\infty)\dvtx x\in B_\lambda\}.
\]
Since the functions $G^r$ and $F^{\lambda\invrenegs}$ are continuous
and nondecreasing, we have $B_\lambda= [b_l^\lambda,b_r^\lambda]$.
Let ${\cal I}_{\lambda}$ be the set of states defined by
%
%
\begin{equation}
\label{eq-invman}
{\cal I}_{\lambda} \doteq
\cases{
\{ (\lambda, \lambda\invmeas, \lambda\invrenegs) \}, &
\quad if $\lambda< 1$, \cr
\{ (x_*, \invmeas, \lambda\invrenegs)\dvtx x_* \in B_\lambda
\}, &\quad
if $\lambda\geq1$.}
\end{equation}
We show in Theorem \ref{th-invman} that
${\cal I}_{\lambda}$ describes the so-called invariant manifold for the
fluid limit.
Suppose that ${\cal I}_{\lambda}$ satisfies the following assumption.
\begin{ass}\label{ass-unique}
The set ${\cal I}_{\lambda}$ has a single element.
\end{ass}

Note that this is a nontrivial restriction only when $\lambda\geq
1$. A deterministic fluid limit of the $GI/GI/N+G$ queue
was conjectured to exist in Conjecture~2.2 of~\cite{whifluid06}, and
Theorem 3.1 of \cite{whifluid06} states that this
fluid limit has a~unique steady state. However,
as shown in the example in Section \ref{subs-interchange}, in general
there need not
be a unique invariant state (or, equivalently, a unique steady state in the
sense of \cite{whifluid06}) due to the possibility of the existence of multiple
solutions to the equation (\ref{eqnGr}) below.
Thus, we explicitly
assume uniqueness of the steady state to obtain the full convergence result.
We now provide a~general
sufficient condition for Assumption \ref{ass-unique} to hold.
\begin{lemma}
\label{cor-invman}
If either $\lambda< 1$ or
$\lambda\in[1,\infty)$ and the equation
%
%
\begin{equation}G^r(x)=\frac{\lambda-1}{\lambda}
\label{eqnGr}
\end{equation}
has a unique solution, then Assumption \ref{ass-unique} holds. In
particular, this is true if $G^r$ is strictly
increasing.
\end{lemma}
\begin{pf}
Fix $\lambda\in[1,\infty)$. It suffices to show that the set
$B_\lambda$ in (\ref{eq-fp}) consists of a single point.
Since\vspace*{1pt} the equation in (\ref{eqnGr}) has a unique solution and the
function $(F^{\lambda\invrenegs}
)^{-1}(\cdot)$ is strictly increasing on $[0,\lambda\theta^r)$, the
equation
\[
G^r \bigl( (F^{\lambda\invrenegs}
)^{-1} \bigl((x-1)^+\bigr) \bigr) = \frac{\lambda-
1}{\lambda}
\]
has a unique solution. Thus, $B_\lambda$ has a single element and the
lemma follows.
\end{pf}

For each $N \in\N$, let $ \fyn= (\fren,\fxn,\fmeasn,\frenegn)$
be the fluid scaled~sta\-te descriptor defined
as follows: for $t \in[0,\infty)$ and any Borel subset $B$ of~$\R_+$,
%
%
\begin{eqnarray}
\label{fl-scaling}
\fren(t)& \doteq&\ren(t),\qquad \fxn(t) \doteq\frac{\xn(t)}{N},
\nonumber\\[-8pt]\\[-8pt]
\fmeasn_t (B) &\doteq& \frac{\measn_t (B)}{N},\qquad \frenegn_t (B) \doteq
\frac{\renegn_t (B)}{N}.\nonumber
\end{eqnarray}
Analogously, for $I=E,D,K,Q,R,S$, define
%
%
\begin{eqnarray}
\label{fl-scaling2}
\fidlen\doteq\frac{\idlen}{N}. 
\end{eqnarray}

The following standard assumption is imposed on the sequences of
fluid scaled external arrival processes $\{\fen\}$ and initial conditions
$(\renegn_0,\measn_0)$, $N \in\N$.
%
\begin{ass}
\label{ass-en} The following conditions are satisfied:
\begin{longlist}[(2)]
\item[(1)] There exists $\lambda\in[0,\infty)$ such that
$\overline{\lambda}{}^{(N)} = \lambda^{(N)}/N \rightarrow\lambda$ as
$N \rightarrow\infty$;\vspace*{1pt}
\item[(2)] As $N \ra\infty$, $\fen\ra\fe$ in ${\cal D}_{\R
_+}[0,\infty)$
$\PP$-a.s., where $\fe(t)=\lambda t$;\vspace*{1pt}
\item[(3)] $\E[\lan{\mathbf1},\renegn_0\ran]<\infty$ and $\E[\lan
{\mathbf1},\measn_0\ran]<\infty$ for each $N\in\N$.
\end{longlist}
\end{ass}

The following technical assumption was imposed on the hazard rate functions
in \cite{kanram08b} to establish the fluid
limit theorem.
\begin{ass} \label{ass-h}
There exists $L^s<H^s$ such that $h^s$ is either bounded or
lower-semicontinuous on $(L^s,H^s)$, and likewise, there exists
$L^r<H^r$ such that $h^r$ is either bounded or lower-semicontinuous on
$(L^r,H^r)$.
\end{ass}

We conclude with a mild assumption on the interarrival distribution
function $F^{(N)}$.\vadjust{\goodbreak}
\begin{ass} \label{assinterdis}
The interarrival distribution $F^{(N)}$ has a density.
\end{ass}

\subsection{Main results}
\label{subs-mainresults}

The first result focuses on the existence of a stationary distribution
for the state process.
\begin{theorem}
\label{thm-convstat0}
For each $N$, under Assumption \ref{assinterdis},
$\{\yn_t, {\cal F}_t^{(N)}\}$ is a~Feller process
that has a stationary distribution.
\end{theorem}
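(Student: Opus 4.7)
The plan is to split the statement into two claims and tackle them in order: first prove that the transition semigroup $\{P_t\}$ of $\{\yn_t\}$ is Feller on $\mathcal{Y}^{(N)}$, and then invoke Krylov--Bogolyubov to obtain a stationary distribution. For the Feller property, given convergent initial states $y^n \to y$ in $\mathcal{Y}^{(N)}$, I would realize the processes $\{Y^{(N),n}\}_n$ and $Y^{(N)}$ on a common probability space sharing the same post-zero sequences of inter-arrival increments, service requirements, and patience times, so that only the initial data $(\fren(0), \xn(0), \measn_0, \renegn_0)$ differ. Weak convergence of the atomic measures $\measn_0^n$ and $\renegn_0^n$ is, after relabeling, equivalent to convergence of the atom locations (their total masses are integers, hence eventually constant). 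Between event times the measure-valued components are pure rightward translations of their atoms, and events (arrivals, service completions, potential reneging, and effective reneging at the head-of-line threshold $\chi^{(N)}$) occur when atoms cross fixed thresholds determined by the shared noise. Under Assumption \ref{ass:interdis} and the density assumptions on $G^r$ and $G^s$, for every fixed $t$ the event $\{t \text{ is an event time of } Y^{(N)}\}$ has probability zero; on the complementary full-measure event the perturbed paths display exactly the same sequence of events as $Y^{(N)}$ up to time $t$, with event times converging, yielding $Y^{(N),n}_t \to Y^{(N)}_t$ almost surely in $\mathcal{Y}^{(N)}$ and hence $\E_{y^n}[f(Y^{(N)}_t)] \to \E_y[f(Y^{(N)}_t)]$ for every $f \in C_b(\mathcal{Y}^{(N)})$.

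For existence of a stationary distribution I would apply Krylov--Bogolyubov to the Cesaro averages $\mu_T \doteq T^{-1} \int_0^T \P_{y_0}(Y^{(N)}_s \in \cdot)\, ds$ starting from some fixed $y_0$ (e.g.\ the empty state). The Feller property just established ensures any weak subsequential limit of $\{\mu_T\}$ is $P_t$-invariant, so it suffices to establish tightness of $\{\mu_T\}_{T\ge 1}$. A compact subset of $\mathcal{Y}^{(N)}$ is characterized by boundedness of the scalar coordinates $\fren$, $\xn$ and uniform tightness of the measure-valued coordinates: bounded total mass with no escape of mass to $H^s$ or $H^r$. The bound $\langle \f1,\measn_s\rangle \le N$ is automatic; Proposition \ref{lem:de} with $f\equiv 1$ yields an $s$-uniform bound on $\E[\langle \f1,\renegn_s\rangle]$ via the elementary renewal theorem and the finite mean $\theta^r$. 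Applying Proposition \ref{lem:de} with $f = \ind_{[R,H^r)}$ and $f = \ind_{[R,H^s)}$ (the latter combined with the bound $\E[\kn(t)] \le \E[\en(t)] + \E[\qn(0)]$ inferred from the mass balance (\ref{mass-queue})) shows that the mass of $\renegn_s$ near $H^r$ and of $\measn_s$ near $H^s$ vanishes uniformly in $s$ as $R$ approaches the respective boundary. Since $\xn \le N + \qn \le N + \langle \f1,\renegn\rangle$, this also controls $\xn(s)$. Tightness of $\{\ren(s)\}$ follows from Blackwell's renewal theorem, which applies under Assumption \ref{ass:interdis}, and gives convergence in distribution of $\ren(s)$ to the explicit invariant density $\lambda^{(N)}(1-F^{(N)}(\cdot))$. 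Markov's inequality then assembles these estimates into a compact set $K_\epsilon \subset \mathcal{Y}^{(N)}$ with $\mu_T(K_\epsilon) \ge 1 - \epsilon$ uniformly in $T$, completing the Krylov--Bogolyubov argument.

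The main obstacle I expect is the Feller property itself. On this infinite-dimensional Polish state space with discontinuous, jump-driven dynamics, verifying that $y \mapsto Y^{(N)}_t(y,\omega)$ is almost surely continuous at each $y$ reduces to ruling out coincidences between fixed times $t$ and event times and between different event times in the coupled construction, and that is precisely the role of Assumption \ref{ass:interdis} together with the standing density hypotheses on $G^r$ and $G^s$. The tightness estimates for the second part, by contrast, reduce to bookkeeping that is essentially handed to us by the representation formula in Proposition \ref{lem:de}.
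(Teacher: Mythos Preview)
Your proposal is correct and follows essentially the same route as the paper: a coupling argument on the piecewise-deterministic dynamics (with shared post-zero arrival, service, and patience sequences) to obtain the Feller property via almost-sure convergence of jump times and inter-jump shifts, and then Krylov--Bogolyubov with tightness of the Ces\`aro averages established through the representation formula of Proposition~\ref{lem:de} and renewal-theoretic bounds. The only place where the paper works noticeably harder than your sketch is the uniform control of $\E[\measn_s[c,H^s)]$, where the crude bound $\E[\kn(t)]\le \E[\en(t)]+\E[\qn(0)]$ is not quite enough by itself and an integration-by-parts argument using the full mass balance is needed; but this is a refinement of exactly the step you indicate, not a different idea.
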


The Feller property is proved in Proposition \ref{lemfeller} and
the existence of a stationary distribution is established in Theorem
\ref{thm-SD}. In Theorem \ref{thm-ergodic},
the state process is also shown to be
ergodic under an additional condition (Assumption \ref{assPosF}) which
holds, for example,
when the interarrival,
reneging and service
densities are strictly positive and the latter two have support on
$(0,\infty)$.

We now state the main result, which provides a first-order approximation
for stationary distributions of $N$-server queues.
\begin{theorem}
\label{thm-convstat} $\!\!\!$Suppose Assumptions \ref{as-mean}, \ref{ass-en} and
\ref{assinterdis} hold and for \mbox{$N \in\N$}, let
$\fyn_* = (\overline{\alpha}{}^{(N)}_{E,*}, \fxns,\fmeasns,\frenegns
)$ be a scaled stationary
distribution for the $N$-server queue with abandonment.
Then the sequence $\fyn_*, N \in\N$, is tight.
If, in addition, Assumption \ref{ass-h} holds, then the limit of
any convergent subsequence of the sequence
$(\fxns, \fmeasns, \frenegns), N \in\N$,
almost surely takes values on the invariant manifold ${\cal I}_{\lambda}$.
Furthermore, if Assumption \ref{ass-unique} also holds, then
the sequence $(\fxns, \fmeasns, \frenegns)$, $N \in\N$,
converges to the unique element of ${\cal I}_{\lambda}$.
\end{theorem}

A related discrete-time result was conjectured in Theorem 7.2 of Whitt~\cite{whifluid06}.
In particular, Theorem 7.2 of
\cite{whifluid06} states that if the discrete
model introduced in \cite{whifluid06} satisfies the assumptions that
(i) each $N$-server queueing system converges (for large times) to a
unique stationary distribution; (ii) the sequence of fluid-scaled
stationary distributions is tight; and (iii) the sequence of fluid-scaled
stationary distributions has a weak limit as $N \ra\infty$,
this limit must be equal to the unique steady-state associated with the
fluid model described in \cite{whifluid06}.
The validity of properties (ii) and (iii) was not established in
\cite{manzel05}.
In contrast, we consider the continuous model,
and for this model establish tightness and (under
the  additional assumption that there is a unique invariant state)
existence of a weak limit.
The proof of Theorem \ref{thm-convstat} is given in Section \ref
{secconv} and
consists of the following main steps.
In Theorem \ref{thm-convstat0}, the Markovian
nature of the state representation is used to establish the
existence of a stationary distribution for each $N$-server system.
In Theorem~\ref{thm-tight} a convenient representation for the state
dynamics in
the $N$-server system (see Proposition \ref{lemde}) is used to\vadjust{\goodbreak}
establish tightness of
any sequence of fluid-scaled stationary distributions.
It is shown in Section \ref{subs-interchange} that, in general,
the steady state (equivalently an element of the invariant manifold)
need not in fact be unique.
Nevertheless, it is shown that any subsequential limit must
be an invariant state, and that when there is a~unique invariant
state,
the desired convergence follows.
Sufficient conditions for uniqueness of the invariant state are given
in Lemma
\ref{cor-invman}.

The characterization of the stationary distribution
and a better understanding of the possible metastable behavior
of the $N$-server queue in the presence of multiple invariant
states for the fluid remains a subject for future investigation.

\section{Stationary distribution of the $N$-server queue}
\label{secSD}

We now establish the existence of a stationary distribution for the Markovian
state descriptor
$\{\yn_t, {\cal F}_t^{(N)}\}$ for the system with $N$ servers, under
Assumption \ref{assinterdis}.
First, in Section \ref{secSMF}, $\{\yn_t,{\cal
F}_t^{(N)}\}_{t\geq0}$ is shown to be a Feller process
(see Proposition \ref{lemfeller}). Then, in Section
\ref{secexistSD}, the Krylov--Bogoliubov existence theorem (cf. Corollary
3.1.2 of \cite{GDPJZ}) is used to show that $\{\yn_t,{\cal
F}_t^{(N)}\}_{t\geq
0}$ has a stationary distribution. Finally, in the \hyperref[secPHR]{Appendix},
ergodicity and
positive Harris recurrence of the process $\{\yn_t,{\cal F}_t^{(N)}\}
_{t\geq
0}$
is established under an additional condition (Assumption
\ref{assPosF}).
For conciseness, in the rest of this section, $N$~is fixed and
the dependence on $N$ is omitted from the notation.

\subsection{Feller property} \label{secSMF}
It follows from the definition of $Y$ in (\ref{disyn}) and Lem\-ma~B.1 of
\cite{kanram08b} that $Y$ is a so-called piecewise deterministic Markov
process with jump times $\{\tau_1,\tau_2,\ldots\}$
(see \cite{jacobsen} for a precise definition of piecewise deterministic
Markov processes),
where each jump time is either the arrival time of a new customer, the
time of a service completion or the end of a~patience time.
Note that the set of jump times also includes the time of entry into service
of each customer because, due to the nonidling condition, each such
entry time
coincides with either the arrival time of that customer or the time of service
completion of another customer. Let $\tau_0=0$. For each $i \in\Z
_+$, $Y$
evolves in a deterministic fashion on $[\tau_i,\tau_{i+1})$,
\[
Y(\tau_i+t)
= \phi_{Y(\tau_i)}(t),\qquad t\in[0,\tau_{i+1}-\tau_i),
\]
where, for each
$y\in\cal Y$ of the form $y=(\alpha, x, \sum_{i=1}^k \delta
_{u_i},
\sum_{j=1}^l \delta_{z_j})$, $k, l\in\N$, $k\leq N$, we define
%
%
\begin{equation}
\label{phiF} \phi_y(t)\doteq\Biggl(\alpha+t, x,
\sum_{i=1}^k
\delta_{u_i+t}, \sum_{j=1}^l \delta_{z_j+t}\Biggr),\qquad t\geq
0.
\end{equation}
The Markovian semigroup of $Y$ is defined in the usual way: for each
$t\geq0$, $y\in{\cal Y}$ and $A\in{\cal B}({\cal Y})$, the set of
Borel subsets of ${\cal Y}$, let
%
%
\begin{equation}
\label{dissemiG} P_t(y,A)\doteq
\PP\bigl(Y(t)\in A|Y(0)=y\bigr).
\end{equation}
Moreover, for any measurable function $\psi$
defined on ${\cal Y}$ and $t\geq0$, let $P_t\psi$ be the function on
${\cal Y}$ given by
%
%
\begin{equation}\label{defPpsi} P_t\psi(y)\doteq\E[\psi
(Y(t))|Y(0)=y],\qquad y\in{\cal Y}.
\end{equation}
We now show that the semigroup $\{P_t, t\geq0\}$ is Feller in the
sense of \cite{GDPJZ} (see the beginning of Section 3.1 therein), that is,
we show that for any $\psi\in C_b({\cal Y})$ and $t\geq0$, $P_t\psi
\in C_b({\cal Y})$.

For each $m\in\Z_+$, let $Y^m$ be the state descriptor of an
$N$-server queue
with initial state
\[
Y^m(0)=y^m=\Biggl(\alpha^{m}, x^m, \sum_{i=1}^{k^m}
\delta_{u_i^m}, \sum_{j=1}^{l^m} \delta_{z_j^m}\Biggr)\in\cal Y
\]
for some $k^m \in\{0, \ldots, N\}$ and $l^m \in\N$. Suppose
that $\{Y^m, m\in\Z_+\}$ are defined on the same probability space
and $y^m$
converges to $y^0$ as $m\ra\infty$. Due to the nature of the topology
on ${\cal Y}$, the convergence of $y^m$ to $y^0$ implies
that $x^m=x^0, k^m=k^0, l^m=l^0$ for all sufficiently large $m$ and,
as $m\ra\infty$, $\alpha^{m}\ra\alpha^{0}$, $u_i^m\ra u_i^0$ and
$z_j^m\ra z_j^0$ for each $1\leq i\leq k^0, 1\leq j\leq l^0$.
Without loss of generality, we may assume that $x^m=x^0, k^m=k^0, l^m=l^0$
for every $m\in\Z_+$. For the $m$th $N$-server system, $m\in\Z_+$,
the time
since the arrival of
the last customer before time $0$ is $\alpha^m$ and hence, the random time to
the arrival of the first customer after time $0$ has distribution
function $F(\alpha^m + \cdot)/(1-F(\alpha^m))$, which has a density by
Assumption~\ref{assinterdis}.  Likewise, the distribution of
the residual patience time
of the initial customer associated with the point mass
$\delta_{z_j^m}$ has density $g^r(z_j^m+\cdot)/(1-G^r(z_j^m))$ and the
distribution of the residual service time of the initial customer associated
with the point mass $\delta_{u_i^m}$ has density
$g^s(u_i^{m}+\cdot)/(1-G^s(u_i^m))$.
For simplicity, henceforth we will denote $k^0, l^0, x^0$ simply by $k, l,
x$.
We assume that the elements of the sequence $\{Y^m, m\in\Z_+\}$ are
coupled so that:
\begin{itemize}
\item the interarrival times after the first arrival and the sequences
of service times and patience times of customers that arrive after time
$0$ are identical for each $N$-server queue $Y^m, m\in\Z_+$;
\item
the  first arrival time of a new customer in the $m$th $N$-server queue converges
to the first arrival time  in the $0$th $N$-server queue
(note that this is equivalent to the convergence of the
residual interarrival times at time zero in the corresponding systems);
\item for each $j=1,\ldots,l$, the residual patience time of the
customer associated with the point mass $\delta_{z_j^m}$ converges, as
$m\ra\infty$, to the residual patience time of the customer
associated with the point mass $\delta_{z_j^0}$;
\item for each $i=1,\ldots,k$, the residual service time of the
customer associated with the point mass $\delta_{u_i^m}$ converges, as
$m\ra\infty$, to the residual service time of the customer associated
with the point mass $\delta_{u_i^0}$.
\end{itemize}
\begin{lemma}\label{lemjumpT}
Suppose Assumption \ref{assinterdis} holds.
For each $m\in\Z_+$ and \mbox{$n\in\N$}, let $\tau^m_n$ be the $n$th jump
time of $Y^m$. Then for each $n\in\N$, $\tau^m_n$ converges to~$\tau
^0_n$ and $Y^m(\tau^m_n)$ converges in $\cal Y$ to $Y^0(\tau^0_n)$
a.s., as $m\ra\infty$.
\end{lemma}
\begin{pf}
We prove the lemma by an induction argument.
First, consider \mbox{$n=1$}. For each $m\in\Z_+$, the first jump time $\tau
^m_1$ is
the minimum of the first arrival time of a new customer, the residual
patience times of initial customers with potential waiting
times in the set $\{z_j^m, 1\leq j\leq l\}$ and the residual service times
of initial customers
associated with ages in the set $\{u_i^m, 1\leq i\leq k\}$. It follows
directly from the assumptions on $\{Y^m, m\in\Z_+\}$ that for every
realization,
%
%
\begin{equation}
\label{jump1conv}\tau^m_1 \ra\tau^0_1, \qquad\mbox
{as }
m\ra\infty.
\end{equation}
Since the
interarrival distribution $F$, the service time
distribution function~$G^s$ and the patience
time distribution function $G^r$ are independent and have densities,
with probability $1$,
$\tau_1^0$ coincides with exactly one of the following in the $0$th
system: the first arrival time of a new customer, the residual
patience time of an initial customer with
initial
waiting time~$z_j^0$, $1\leq j\leq l$, or the residual service time
of an initial customer with age~$u_i^0$, $1\leq i\leq k$. Let us fix a
realization such that $\tau_1^0$ is equal to the
first arrival time of a~new customer in the $0$th system. The
remaining\vspace*{1pt} two cases can be handled similarly.
In this case, by the convergence of $\tau_1^{m}$ to $\tau_{1}^{0}$, the
convergence of the other quantities stated above and the coupling
construction, for all sufficiently large $m$, $\tau_1^m$ is equal to the first
arrival time of a new customer in the $m$th system.
Hence, for all sufficiently large $m$,
the first jump of $Y^m$ is due to the first arrival of a new customer
in the
$m$th system. For such~$m$, since~$Y^m$ evolves in a deterministic
fashion on
$[0,\tau_{1}^m)$ described by the continuous function $\phi$
introduced in (\ref{phiF}),
we have
\[
Y^m(\tau^m_1-)=\Biggl(\alpha^m+\tau_1^m, x, \sum_{i=1}^k
\delta_{u_i^m+\tau^m_1}, \sum_{j=1}^l \delta_{z_j^m+\tau
^m_1}\Biggr).
\]
If
$k = N$ and $x\geq k=N$, then all the servers are busy and the customer
that arrives
at $\tau^m_1$ will have to wait in queue.
Thus, by the coupling construction,
\[
Y^m(\tau^m_1)=\Biggl(0, x+1, \sum_{i=1}^k \delta_{u_i^m+\tau
^m_1},
\sum_{j=1}^l \delta_{z_j^m+\tau^m_1}+\delta_0\Biggr).
\]
On the
other hand,
if $k < N$, then $x=k$ and there is at least one idle server present.
Hence, the customer will join service immediately upon arrival at time
$\tau^m_1$. Thus, in this case,
\[
Y^m(\tau^m_1)=\Biggl(0, x+1, \sum_{i=1}^k \delta_{u_i^m+\tau
^m_1}+\delta_0, \sum_{j=1}^l \delta_{z_j^m+\tau^m_1}+\delta
_0\Biggr).
\]
In both cases, for the chosen realization, we have $Y^m(\tau^m_1)\ra
Y^0(\tau^0_1)$ as $m\ra\infty$.

Now, suppose that $\tau^m_i$ converges to $\tau^0_i$ and $Y^m(\tau^m_i)$
converges to $Y^0(\tau^0_i)$ a.s., as $m\ra\infty$, for $1\leq i\leq
n$, and
consider $i=n+1$.
Fix a realization such that $\tau^m_n$ converges to $\tau^0_n$ and
$Y^m(\tau^m_n)$ converges to $Y^0(\tau^0_n)$ as $m\ra\infty$.
By the same argument as in the case $n=1$, we may assume, without loss
of generality, that for the chosen realization and $m\in\Z_+$, the
jump at $\tau_n^m$ for~$Y^m$ is due to the arrival of a new customer.
Then, for each $m\in\Z_+$, $ Y^m(\tau^m_n)$ has the following
representation:
\[
Y^m(\tau^m_n)= \Biggl(0, x^m_n, \sum_{i=1}^{k_n^m} \delta
_{u_{i,n}^m}, \sum_{j=1}^{l_n^m} \delta_{z_{j,n}^m}\Biggr)
\]
for some\vspace*{1pt} $k^m_n,l^m_n,x^m_n\in\Z_+$, $u_{i,n}^m,z_{j,n}^m \in
\R_+$ with $x^m_n\leq k^m_n+l^m_n$, $k^m_n\leq N$.
Due to the induction hypothesis and the topology of $\cal Y$, for all
sufficiently large $m$, $x_n^m=x_n^0$, $k_n^m=k_n^0$, $l_n^m=l_n^0$,
$u_{i,n}^m \ra u_{i,n}^0$ and $z_{j,n}^m \ra
z_{j,n}^0$ as $m\ra\infty$ for
each $1\leq i \leq k_n^0$ and $1\leq j \leq l_n^0$. The argument that
was used
for the case $n=1$
can be used again to show that $\tau^m_{n+1}$ converges to $\tau
^0_{n+1}$ and $Y^m(\tau^m_{n+1})$ converges to $Y^0(\tau^0_{n+1})$
a.s., as $m\ra\infty$. This completes the induction argument and
hence, proves the lemma.
\end{pf}
\begin{prop} \label{lemfeller}
Suppose that the interarrival distribution $F$ has a~density. Then the
semigroup $\{P_t, t\geq0\}$ is Feller.
\end{prop}
\begin{pf}
It is easy to see from the definition of the function $P_t\psi$ in~(\ref{defPpsi})
that when $\psi$ is bounded, $P_t\psi$ is also bounded. To prove the
proposition, it
suffices to show that $P_t\psi$ is a continuous function with respect
to the
topology on ${\cal Y}$. Fix $t\geq0$. Let $y^0=(\alpha^{0}, x^0, \mu
^0, \pi^0)\in\cal Y$ and $y^m=(\alpha^{m}, x^m, \mu^m,
\pi^m)$, $m\in\Z_+$, be points in $\cal Y$ such that, as
$m\rightarrow\infty$, $y^m$
converges in $\cal Y$ to $y^0$. Since $\Z_+$ is a discrete
space and $x^m\rightarrow x^0$ as $m\rightarrow\infty$, it must be
that for
all sufficiently large $m$, \mbox{$x^m=x^0$}. Without loss of generality, we assume
that $x^m=x^0$ for each $m\in\N$. Consider a sequence of coupled $N$-server
queues $\{Y^m, m\in\Z_+\}$ carried out earlier such that
$Y^m(0)=y^m$ for each $m\in\Z_+$. Then
$P_t\psi(y^m)=\E[\psi(Y^m(t))]$. To prove the continuity of~$P_t\psi
$, it
suffices to show that $Y^m(t)\ra Y^0(t)$ a.s., as $m\ra\infty$.
Indeed, since
$\psi\in C_b({\cal Y})$, the latter convergence would imply that
$\psi(Y^m(t))\rightarrow\psi(Y^0(t))$ and hence, by the bounded convergence
theorem, that $P_t\psi(y^m)\rightarrow P_t\psi(y^0)$ as $m\rightarrow
\infty$, which would show that $\{P_t, t\geq0\}$ is Feller.

It only remains to prove that almost surely, $Y^m(t)\ra Y^0(t)$ as
$m\ra\infty$.
Since the interarrival distribution $F$, service distribution $G^s$ and
patience distribution $G^r$ all have densities, with probability one
$t$ does
not belong to the set $\{\tau^0_n, n\in\N\}$ of jump times of
$Y^0$. Fix a
realization such that $t$ does not belong to the set $\{\tau^0_n,
n\in\N\}$
and such that for each $n\in\N$, $\tau^m_n$ converges to $\tau^0_n$ and
$Y^m(\tau^m_n)$ converges in $\cal Y$ to $Y^0(\tau^0_n)$, as $m\ra
\infty$.
By Lemma~\ref{lemjumpT}, this can be done on a set of probability one.
Let $r\doteq\sup\{n\dvtx\tau_n^0<t\}$. Then $\tau_r^0<t <\tau
^0_{r+1}$ and hence,
for all sufficiently large $m$, $\tau_r^m<t <\tau^m_{r+1}$. By the convergence
of $\tau_r^m$ to $\tau_r^0$ and $Y^m(\tau^m_r)$ to $Y^0(\tau^0_r)$,
as $m\ra\infty$,
as well as the definition of $\phi$ in (\ref{phiF}), we conclude that
$Y^m(t)\ra
Y^0(t)$, as $m\ra\infty$.
Thus, we have shown that $Y^m(t)\ra Y^0(t)$ a.s., as $m\ra\infty$.
\end{pf}

\subsection{Existence of stationary distributions}
\label{secexistSD}

In this section, it is shown that the Feller process $\{Y_t,{\cal
F}_t\}_{t\geq0}$ admits a stationary distribution. To achieve this,
we apply the Krylov--Bogoliubov theorem (cf. Corollary 3.1.2 of
\cite{GDPJZ})
which requires showing that the following family $\{L_t, t\geq0\}$ of
probability measures associated with $\{Y_t,{\cal F}_t\}_{t\geq0}$ is
tight. For each measurable set $B\subset{\cal Y}$ and $t>0$, define
\[
L_t(B)
\doteq\frac{1}{t}\int_0^t \PP\bigl(Y(s)\in B\bigr) \,ds.
\]
Obviously, for each $t\geq
0$, $L_t$ is a probability measure on $(\cal Y, {\cal B}(\cal Y))$.
We now recall some useful criteria for tightness of a family of random
measures, which can be derived from A7.5 of \cite{Kal} (see also
Exercise 4.11 of \cite{Kal}).
\begin{prop} \label{proptight}
A family $\{\pi_{t}\}_{t\geq0}$ of ${\cal M}_F [0,H)$-valued random
variables is tight if the following two conditions hold:
\begin{longlist}[(2)]
\item[(1)]$\sup_{t\geq0} \E[\lan{\mathbf1}, \pi_t \ran]< \infty$;
\item[(2)]$\lim_{c\rightarrow H} \sup_{t\geq0} \E[\pi_t[c,H)]
\rightarrow0$.
\end{longlist}
\end{prop}
%
%
\begin{lemma}
\label{lem-cond1} Suppose Assumption \ref{as-mean} holds and
$\E[\lan{\mathbf1}, \reneg_0 \ran] < \infty$.
Then
$\sup_{t \geq0} \E[\lan{\mathbf1}, \reneg_t \ran] < \infty$
and
$\sup_{t \geq0} \E[\lan{\mathbf1}, \meas_t \ran] < \infty$.
\end{lemma}
\begin{pf}
Let $f={\mathbf1}$ in (\ref{dis-key1}) and (recalling that the
superscript $N$ is
being suppressed from the notation)
let $e(t)\doteq\E[E(t)]$, $t\geq0$. Using
integration-by-parts, it
follows that
\begin{eqnarray*} \E[\lan{\mathbf1}, \reneg_t \ran] &
\leq
& \E[\lan{\mathbf1}, \reneg_0 \ran] + \int
_0^t\bigl(1-G^r(t-s)\bigr) \,de(s)
\\
&=& \E[\lan{\mathbf1}, \reneg_0 \ran] + e(t)-
\int_0^te(s)g^r(t-s) \,ds \\
&=&
\E[\lan{\mathbf1}, \reneg_0 \ran] + e(t)\bigl(1-G^r(t)\bigr)-
\int_0^t\bigl(e(t)-e(t-s)\bigr)g^r(s) \,ds.
\end{eqnarray*}
Since $E$ is a renewal process with rate $\lambda$,
$e(t)/t \ra\lambda$ as $t\ra\infty$ by the key renewal theorem.
Moreover, the finite mean condition (\ref{def-mean1}) implies
$t(1-G^r(t))\ra
0$ as $t\ra\infty$. Therefore, we have $\sup_{t\geq0} e(t)(1-G^r(t))
<\infty$. The Blackwell\vspace*{1pt} renewal theorem (cf. Theorem 4.3 of
\cite{asmbook})
implies that $e(t)-e(t-s) \ra s \lambda$ as $t\ra\infty$ and hence, that
$\sup_{t\geq0}\int_0^t(e(t)-e(t-s))g^r(s)\,ds<\infty$. Combining these
relations with (3) of Assumption \ref{ass-en} and the last display, we
conclude that $\sup_{t\geq0} \E[\lan{\mathbf1}, \reneg_t
\ran
]<\infty$.

On the other hand, since each $\meas_t$ is the sum of at most $N$ unit Dirac
masses, it
trivially follows that $\sup_{t\geq0}\E[\lan{\mathbf1},
\meas_t \ran]
\leq N < \infty$.\vspace*{-3pt}
\end{pf}

To show that $\{\reneg_t\}_{t\geq0}$ and $\{\meas_t\}_{t\geq0}$
satisfy the second property in Proposition \ref{proptight}, note
that by choosing $f=\indd_{[c,H^r)}, c>0$, in (\ref{dis-key1}),
we obtain for $t \geq0$,
%
%
\begin{eqnarray}
\label{dis4-1}\E[\reneg_t[c,H^r) ] &
\leq&
\E\biggl[\int_{[0,H^r)}\indd_{[c,H^r)}(x+t)\frac
{1-G^r(x+t)}{1-G^r(x)} \reneg_0(dx)\biggr]
\nonumber\\[-9.5pt]\\[-9.5pt]
& &{} + \int_0^t\indd_{[c,H^r)}(t-s)\bigl(1-G^r(t-s)\bigr) \,de(s) \nonumber
\end{eqnarray}
and, likewise, by choosing $f=\indd_{[c,H^s)}$ in (\ref{dis-key2})
it follows that for $t \geq0$,
%
%
\begin{eqnarray}
\label{dis-4-meas}\E[\meas_t[c,H^s)] &
= &
\E\biggl[\int_{[0,H^s)}\indd_{[c,H^s)}(x+t)\frac
{1-G^s(x+t)}{1-G^s(x)} \meas_0(dx)\biggr]
\nonumber\\[-9.5pt]\\[-9.5pt] & &{} +
\E\biggl[\int_0^t\indd_{[c,H^s)}(t-s)\bigl(1-G^s(t-s)\bigr) \,dK(s)\biggr].
\nonumber
\end{eqnarray}

We now establish two supporting lemmas.
%
\begin{lemma} \label{lem-elem2}
Suppose Assumption \ref{as-mean} holds and
$\E[\lan{\mathbf1}, \reneg_0 \ran] < \infty$.
We have
%
%
\begin{equation}\label{dis4-2}\lim_{c\ra H^r}\sup_{t\geq
0}\E\biggl[\int_{[0,H^r)}\indd_{[c,H^r)}(x+t)\frac
{1-G^r(x+t)}{1-G^r(x)}\reneg_0(dx)\biggr]
= 0
\end{equation}
and
%
%
\begin{equation}\label{dis4-4}\lim_{c\ra H^s}\sup_{t\geq0}\E
\biggl[\int
_{[0,H^s)}\indd_{[c,H^s)}(x+t)\frac{1-G^s(x+t)}{1-G^s(x)}\meas
_0(dx)\biggr] = 0.
\end{equation}
\end{lemma}
\begin{pf}
When $H^r<\infty$, we have
\begin{eqnarray*}
&&\sup_{t\geq0}
\E\biggl[\int_{[0,H^r)}\indd_{[c,H^r)}(x+t)\frac
{1-G^r(x+t)}{1-G^r(x)}
\reneg_0(dx)\biggr]\\[-2pt]
&&\qquad\leq\sup_{t\geq0}
\E\biggl[\int_{[0,H^r)}\indd_{[c,H^r)}(x+t) \reneg_0(dx)\biggr]
\\[-2pt]
&&\qquad= \sup_{t\in[0,c)}\E\biggl[\int_{[0,H^r)}\indd
_{[c,H^r)}(x+t)
\reneg_0(dx)\biggr] \\[-2pt]
&&\qquad\quad{}\vee\sup
_{t\in
[c,H^r)}\E\biggl[\int_{[0,H^r)}\indd_{[c,H^r)}(x+t)
\reneg_0(dx)\biggr].\vadjust{\goodbreak}
\end{eqnarray*}
Using $\E[\lan{\mathbf1}, \reneg_0 \ran] < \infty$
to justify the application of the dominated convergence theorem, we obtain
\[
\lim_{c\ra H^r}\sup_{t\in[c,H^r)}
\E\biggl[\int_{[0,H^r)}\indd_{[c,H^r)}(x+t)\reneg_0(dx)\biggr]
\leq\lim_{c\ra H^r} \E\bigl[\reneg_0[0,H^r-c)\bigr] = 0.
\]
On the other hand, we know that
\begin{eqnarray*}
&&\sup_{t\in[0,c)}\E\biggl[\int_{[0,H^r)}\indd_{[c,H^r)}(x+t)\reneg
_0(dx)\biggr]\\
&&\qquad\leq\sup_{t\in[0,c)}\E[\reneg_0(c-t,H^r-t)].
\end{eqnarray*}
We show by contradiction that $\sup_{t\in
[0,c)}\E[\reneg_0(c-t,H^r-t)]\ra0$ as $c\ra H^r$. Suppose
this is not true. Then there exist $\delta>0$ and sequences $\{c_n\}
_{n\in
\N}$ and $\{t_n\}_{n\in\N}$ such that $c_n\ra H^r$ as $n\ra\infty$,
$t_n\in[0,c_n)$ for each $n\in\N$ and
$\E[\reneg_0(c_n-t_n,H^r-t_n)] > \delta$ for each $n\in
\N$. Because we are considering the case $H^r < \infty$, $\{t_n\}
_{n\in\N}$ is bounded and so we can take a subsequence,
which we call again $\{t_n\}_{n\in\N}$, such that $\lim_{n\ra
\infty}t_n=t_*\in[0,H^r]$. In turn, this implies
\[
\lim_{n\ra\infty}\E[\reneg_0(c_n-t_n,H^r-t_n)] = 0,
\]
which contradicts the initial hypothesis. Thus,
$\sup_{t\in[0,c)}\E[\reneg_0(c-t,H^r-t)]\ra0$.
Together with the last three displays, this implies
that (\ref{dis4-2}) holds when $H^r < \infty$.
On the other hand, when $H^r=\infty$ we have
\begin{eqnarray*}
&&\sup_{t\geq0}
\E\biggl[\int_{[0,H^r)}\indd_{[c,H^r)}(x+t)\frac
{1-G^r(x+t)}{1-G^r(x)}
\reneg_0(dx)\biggr]\\
&&\qquad\leq\max\biggl(\sup_{t\in
[0,c/2)}\E\biggl[\int_{[0,\infty)}\indd_{[c,\infty)}(x+t)
\reneg_0(dx)\biggr], \\
&&\hspace*{59.6pt}\sup_{t\in[c/2,\infty)}\E\biggl[\int_{[0,\infty)} \frac
{1-G^r(x+t)}{1-G^r(x)}
\reneg_0(dx)\biggr] \biggr)\\
&&\qquad\leq\E[\reneg_0(c/2,\infty)] \vee
\E\biggl[\int_{[0,\infty)} \frac{1-G^r(x+c/2)}{1-G^r(x)}
\reneg_0(dx)\biggr].
\end{eqnarray*}
Sending $c \ra\infty$ on both sides, and using the
fact that $\E[\lan{\mathbf1}, \reneg_0 \ran] < \infty$, an
application of the dominated convergence theorem shows that the right-hand
side
vanishes and thus (\ref{dis4-2}) holds in this case too.
The proof of (\ref{dis4-4}) is exactly analogous and is thus omitted.
\end{pf}
\begin{lemma} \label{lem-elem} Suppose Assumption \ref{as-mean}
holds and
let $e(t)\doteq\E[E(t)], t\geq0$. For $(H,G)=(H^r,G^r)$ and
$(H,G)=(H^s,G^s)$, we have
%
%
\begin{equation}\label{dis4-3} \lim_{c\ra H}\sup
_{t\geq0}
\int_0^t\indd_{[c,H)}(t-s)\bigl(1-G(t-s)\bigr) \,d e(s) = 0.
\end{equation}
\end{lemma}
\begin{pf}
$E$ is a (delayed) renewal process with rate $\lambda$ and due to
Assumption~\ref{as-mean} and Proposition 4.1 in Chapter V of \cite{asmbook},
the function $x\mapsto\indd_{[c,H)}(x)(1-G(x))$ is directly Riemann
integrable.
Thus, by the key renewal theorem (cf. Theorem 4.7 of \cite{asmbook})
we obtain
\[
\lim_{t\ra\infty}\int_0^t\indd_{[c,H)}(t-s)\bigl(1-G(t-s)\bigr) \,de(s) =
\frac{1}{\lambda}\int_{[0,\infty)} \indd_{[c,H)}(x)\bigl(1-G(x)\bigr) \,dx.
\]
Since the integrability condition imposed in Assumption \ref{as-mean}
implies that
$\int_{[0,\infty)} \indd_{[c,H)}(x)(1-G(x)) \,dx \ra0$ as $c\ra
H$, we
have the desired result.
\end{pf}
\begin{lemma} \label{lemrenegT}
Suppose Assumption \ref{as-mean} holds and the initial condition satisfies
$\E[\lan{\mathbf1}, \reneg_0] < \infty$. Then
the family $\{\reneg_t\}_{t \geq0}$ of ${\cal M}_F [0,H^r)$-valued random
variables and the family $\{\meas_t\}_{t \geq0}$ of ${\cal M}_F
[0,H^s)$-valued random variables
are tight.
\end{lemma}
\begin{pf}
Both families satisfy the first condition of Proposition \ref{proptight}
due to Lemma \ref{lem-cond1}.
Combining (\ref{dis4-1}) with
(\ref{dis4-2}) and Lemma \ref{lem-elem} for the case
$(H,G)=(H^r,G^r)$, it follows that $\{\reneg_t\}_{t \geq0}$ also satisfies
the second condition of Proposition~\ref{proptight} and is thus tight.

It only remains to show that $\{\meas_t\}_{t \geq0}$ also satisfies
the second condition of
Proposition \ref{proptight}. For this, it suffices to show
that as $c \ra H^s$, the supremum (over $t$) of the right-hand side of
(\ref{dis-4-meas}) goes to zero.
Now, let $k(t) \doteq\E[K(t)]$ for $t\geq0$.
Applying the integration-by-parts and change of variable formulae to the
second term on the right-hand side of (\ref{dis-4-meas}), we see that
%
%
\begin{eqnarray}\label{dis-5}\quad
&&\sup_{t\geq
0}\E\biggl[\int_0^t\indd_{[c,H^s)}(t-s)\bigl(1-G^s(t-s)\bigr)\,dK(s)\biggr] \nonumber\\
&&\qquad=\sup_{t>c}\int_0^t\indd_{[c,H^s)}(t-s)\bigl(1-G^s(t-s)\bigr)
\,dk(s)\nonumber\\
&&\qquad= \sup_{t>c} \biggl( k(t-c)\bigl(1-G^s(c)\bigr)- k\bigl((t-H^s)^+\bigr)
\bigl(1-G^s(t\wedge H^s) \bigr) \\
&&\qquad\quad\hspace*{146pt}{} - \int_c^{t\wedge
H^s}k(t-s)g^s(s) \,ds \biggr)\nonumber\\
&&\qquad\leq\sup_{t>c}
\biggl(k(t-c)\bigl(1-G^s(t)\bigr)+\int_c^{t\wedge H^s}\bigl(k(t-c)-k(t-s)\bigr)g^s(s)
\,ds\biggr).\nonumber
\end{eqnarray}
Taking expectations of both sides of (\ref{mass-queue}), we obtain for
each $t\geq0$,
\[
\E[Q(0)] + e(t) = \E[Q(t)] + \E
[R(t)] + k(t).
\]
Since $Q$ and $R$ are nonnegative and $R$ is increasing, it follows
that
\[
k(t-c) \leq e(t-c) +\E[Q(0)]
\]
and
\[
k(t-c)-k(t-s) \leq e(t-c)-e(t-s)+\bigl(\E[Q(t-s)]-\E
[Q(t-c)]\bigr).
\]
Substituting these inequalities into (\ref{dis-5}) and
carrying out another integration-by-parts, we obtain
%
%
\begin{eqnarray}\label{add-2}
&&\sup_{t>c}
\int_0^t\indd_{[c,H^s)}(t-s)\bigl(1-G^s(t-s)\bigr) \,dk(s)\nonumber\\
&&\qquad\leq
\sup_{t>0}\int_0^t\indd_{[c,H^s)}(t-s)\bigl(1-G^s(t-s)\bigr)
\,de(s)\nonumber\\[-8pt]\\[-8pt]
&&\qquad\quad{} +
\sup_{t>c}\E[Q(0)] \bigl(1-G^s(t)\bigr)\nonumber\\
&&\qquad\quad{}+\sup_{t>c}\int_c^{t\wedge
H^s}\bigl(\E[Q(t-s)]-\E[Q(t-c)]\bigr)g^s(s) \,ds.\nonumber
\end{eqnarray}
Applying Lemma \ref{lem-elem}, with $(H,G)=(H^s,G^s)$, we have
\[
\lim_{c\ra
H^s}\sup_{t\geq0}\int_0^t\indd_{[c,H^s)}(t-s)\bigl(1-G^s(t-s)\bigr) \,de(s) = 0.
\]
Moreover,
\[
\lim_{c\ra H^s}\sup_{t>c}\E[Q(0)] \bigl(1-G^s(t)\bigr) =
\E[Q(0)]\lim_{c\ra H^s}\bigl(1-G^s(c)\bigr) = 0.
\]
Also, since $Q(t)\leq\lan{\mathbf1}, \reneg_t \ran$ by (\ref{qn}),
we have
\[
\sup_{t>c}\int_c^{t\wedge
H^s}\bigl(\E[Q(t-s)]-\E[Q(t-c)]\bigr)g^s(s) \,ds
\leq2 \sup_{t\geq0} \E[\lan{\mathbf1}, \reneg_t \ran
] \bigl(1-G^s(c)\bigr).
\]
Since Lemma \ref{lem-cond1} implies $\sup_{t \geq
0} \E[\lan{\mathbf1}, \reneg_t \ran] < \infty$,
the right-hand side of the above inequality tends to zero as $c\ra H^s$.
Combining the last five assertions with (\ref{dis-5}) and (\ref
{add-2}), it follows that
as $c \ra H^s$, the supremum over $t \geq0$ of the second term on the
right-hand side of (\ref{dis-4-meas}) vanishes to zero. On the other hand,
as $c \ra H^s$, the supremum over $t \geq0$ of the first term on the
right-hand side of (\ref{dis-4-meas}) also vanishes to zero by (\ref{dis4-4}).
Thus, we have shown that $\sup_{t\geq0}\E[\meas
_t[c,H^s)] \ra0$ as $c\ra
H^s$, and the proof of the lemma is complete.
\end{pf}
%
%
\begin{lemma} \label{lemLtight}
Suppose Assumption \ref{as-mean} holds and $\E[ \lan{\mathbf1},
\reneg_0 \ran] <
\infty$.
The family of probability measures $\{L_t\}_{t\geq0}$ is tight.
\end{lemma}
\begin{pf}
By Lemma \ref{lemrenegT}, we know that for each $\delta>0$, there
exist two compact subsets $\tilde C_{\delta}\subset{\cal M}_F
[0,H^s)$ and
$\tilde D_{\delta} \subset{\cal M}_F [0,H^r)$ such that
%
%
\begin{eqnarray}
\label{add-3}
\inf_{t\geq0}\PP(\meas_t\in\tilde C_{\delta} ) &
\geq
& 1-\delta/2, \nonumber\\[-8pt]\\[-8pt]
\inf_{t\geq0}\PP(\reneg_t\in\tilde D_{\delta} )&
\geq
& 1-\delta/2.\nonumber
\end{eqnarray}
It follows from (\ref{qn}) and (\ref{def-xn}) that $X(t)\leq\lan
{\mathbf1},
\meas_t \ran+ \lan{\mathbf1}, \reneg_t \ran$ for each $t\geq0$.
Together with
(\ref{add-3})
and the fact that the map $\mu\rightarrow\lan{\mathbf1}, \mu\ran$
is continuous,
this implies that there exists $b>0$ such that
%
%
\begin{equation}\label{compact3} \inf_{t\geq0} \PP\bigl(X(t)\leq b
\bigr)\geq
1-\delta.
\end{equation}
On the other hand, by Theorem 4.5 in Chapter V of \cite{asmbook}, it
follows that as $t\rightarrow\infty$, $\re(t)$ converges weakly to
the distribution
%
%
\begin{equation}\label{deff0} F_0(t)\doteq\lambda\int_0^t
\bigl(1-F(y)\bigr)\,dy.
\end{equation}
Thus, there exist $T_0>0$ and $c>0$ such that for all
$t\geq T_0$,
\[
\PP\bigl(\re(t)\leq a\bigr)\geq F_0(a)-\delta/2 \geq1-\delta.
\]
By choosing $a$ large enough, we may assume without loss of generality,
that
\[
\inf_{t\in[0,T_0]}\PP\bigl(\re(t)\leq a\bigr)\geq1-\delta.
\]
Define $C_\delta\doteq[0,a] \times[0, b] \times\tilde C_{\delta}
\times\tilde D_{\delta}$. Then the set $C_\delta$ is compact and
$L_t(C_\delta)\geq1-\delta$ for each $t\geq0$, which proves the
lemma.
\end{pf}

Since $\{Y_t, {\cal F}_t\}_{t\geq0}$ is a Feller process by
Proposition \ref{lemfeller}, and
Lemma \ref{lemLtight} is applicable when
the initial condition satisfies $\E[ \lan{\mathbf1}, \reneg_0 \ran
] <
\infty$,
the Krylov--Bogoliubov theorem
immediately yields the following result.
\begin{theorem} \label{thm-SD}
Suppose that Assumptions \ref{as-mean} and \ref{assinterdis} hold.
Then the state descriptor $(\re, X, \meas, \reneg)$ has a stationary
distribution $(\alpha_{E,*}, X_*, \meas_*, \reneg_*)$ that satisfies
$\E[ \lan{\mathbf1}, \reneg_* \ran] <
\infty$.
\end{theorem}

\section{Fluid limit}
\label{sec-Fluid}

In Section \ref{sec-flconj}, we describe a deterministic dynamical
system that
was shown in Theorems 3.5 and 3.6 of
\cite{kanram08b} to arise as the so-called fluid
limit of a many-server queue with abandonment that has
service time and patience time distribution functions $G^s$ and $G^r$,
respectively. In Section~\ref{secIM}, we identify the invariant manifold
associated with the fluid limit, which is then used in Section~\ref
{secconv} to
obtain a first-order asymptotic approximation to the stationary distribution
of the fluid scaled state descriptor~$\overline Y{}^{(N)}$.

\subsection{Fluid equations}
\label{sec-flconj}

The state of the fluid system at time $t$ is represented by
the triplet
\[
(\fx(t), \fmeas_t, \freneg_t) \in
\R_+ \times\mmfs\times\mmfr.
\]
Here, $\fx(t)$ represents the mass (or, equivalently, limiting scaled number
of customers) in the
system at time $t$, $\fmeas_t[0,x)$ represents the mass
of customers in service at time $t$ who have been in
service for less than $x$ units of time, whereas
$\freneg_t[0,x)$ represents the mass of customers
in the system who, at time $t$,\vadjust{\goodbreak}
have been in the system no more than $x$ units of time and
whose patience time exceeds their time in system (which implies,
in particular, that they have not yet abandoned the system).
The inputs to the system are the (limiting) cumulative arrival process
$\fe$
and the initial conditions $\fx(0)$, $\fmeas_0$ and $\freneg_0$.
Thus, $\lan{\mathbf1}, \fmeas_0 \ran$ represents the total mass of customers
in service at time $0$ and the fluid analog of the nonidling condition
(\ref{def-nonidling}) is
%
%
\begin{equation}\label{fnonidling} 1 - \lan{\mathbf1}, \fmeas_0
\ran= [1-\fx(0)]^+.
\end{equation}
The quantity $\lan{\mathbf1}, \freneg_0 \ran$ represents the
total mass of customers
at time $0$ whose residual patience times are positive. Hence, we have
\[
[\fx(0)-1]^+ \leq\lan{\mathbf1}, \freneg_0 \ran.
\]
Thus, the space of possible input data for the fluid equations is
given by
%
%
\begin{eqnarray}
\label{def-newspace}\quad
\newspace&\doteq&
\{(e,x,\nu, \eta) \in\incspace\times\R_+ \times
\mmfs\times\mmfr\dvtx\nonumber\\[-8pt]\\[-8pt]
&&\hspace*{97.3pt}1 - \lan{\mathbf1}, \nu\ran= [1-x]^+, [x-1]^+\leq\lan{\mathbf
1}, \eta\ran\},\nonumber
\end{eqnarray}
where recall that $\incspace$ is the subset of nondecreasing functions
$f \in{\cal D}_{\R_+}[0,\infty)$ with $f(0) = 0$. Let $\fntf(x)$
denote $\freneg_t[0,x]$ for each $x \in[0,H^r)$.
\begin{defn}[(Fluid equations)]
\label{def-fleqns}
Given any $(\fe,\fx(0),\fmeas_0, \freneg_0)
\in\newspace$, we say that the
c\`{a}dl\`{a}g function
$(\fx, \fmeas, \freneg)$ taking values
in $\R_+ \times\mmfs\times\mmfr$
satisfies the associated fluid equations if for every
$t \in[0,\infty)$,
%
%
\begin{equation}
\label{cond-radon}
\int_0^t \lan h^r, \freneg_s \ran \,ds < \infty,\qquad \int_0^t
\lan h^s,
\fmeas_s \ran \,ds < \infty
\end{equation}
for every bounded Borel measurable function $f$ defined on $\R_+$,
%
%
\begin{eqnarray}
\label{eqnnu}
\int_{[0,H^s)} f (x) \fmeas_t (dx) & = & \int_{[0,H^s )} f(x+t)
\frac{1 - G^s(x+t)}{1 - G^s(x)} \fmeas_0 (dx) \nonumber\\[-7pt]\\[-7pt]
& &
{}+ \int_0^t f(t-s) \bigl(1
- G^s(t-s)\bigr) \,d \fk(s)\nonumber
\end{eqnarray}
and
%
%
\begin{eqnarray}
\label{eqneta}
\int_{[0,H^r)} f (x) \freneg_t (dx) & = & \int_{[0,H^r )} f(x+t)
\frac{1 - G^r(x+t)}{1 - G^r(x)} \freneg_0 (dx) \nonumber\\[-7pt]\\[-7pt]
&&{} + \int_0^t f(t-s) \bigl(1
- G^r(t-s)\bigr) \,d \fe(s),\nonumber
\end{eqnarray}
where
%
%
\begin{eqnarray}
\label{eq-fk}
\fk(t) & = & [ \fx(0)-1]^+ - [\fx(t) - 1]^+ + \fe(t) - \fr(t); \\
\label{eq-fx}
\fx(t) & = & \fx(0) + \fe(t) - \int_0^t \lan h^s, \fmeas_s \ran
\,ds -
\fr(t); \\
\label{eq-fr}
\fr(t) & = & \int_0^t \biggl( \int_0^{[\fx(s) - 1]^+} h^r (
( F^{\freneg_s} )^{-1}
(y) ) \,dy \biggr) \,ds;\\[-15pt]\nonumber
\end{eqnarray}
%
%
\begin{eqnarray}
\label{eq-fnonidling}
1 - \lan{\mathbf1}, \fmeas_t \ran&=& [1 - \fx(t)]^+;
\\
%
%
\label{fq-eta} [\fx(t)-1]^+&\leq&\lan{\mathbf1},
\freneg_t \ran
.
\end{eqnarray}
\end{defn}

Note that these fluid equations are not of the same
form as those given in Definition 3.3 of \cite{kanram08b} because the analogs
of (\ref{eqnnu}) and (\ref{eqneta}) are presented in dynamical form
in \cite{kanram08b} and are only required to be satisfied for
continuous functions with compact support (in particular, see equations
(3.9) and (3.11) of \cite{kanram08b}).
However, these two pairs of equations are equivalent
due to Theorem~4.1 of \cite{kasram07} or, equivalently,
Proposition 4.1 of \cite{kanram08b}, and can be shown to
hold for the larger class bounded measurable functions using standard
monotone convergence arguments. Theorems 3.5 and 3.6 of \cite
{kanram08b} show that under some mild assumptions on the
input data $\overline{E}$, $\overline{\nu}_0$ and $\overline{\eta}_0$ and the
hazard rate functions $h^r$ and $h^s$ (which are
stated as Assumptions \ref{ass-en} and
\ref{ass-h} here), there exists
a unique solution to the fluid equations.

For future purposes, note that if $(\fx, \fmeas, \freneg)$ satisfy
the fluid equations for some $(\fe,\fx(0),\fmeas_0, \freneg_0)
\in\newspace$, then $\fk$
also satisfies
%
%
\begin{equation}
\label{eq-fk2}
\fk(t) = \lan{\mathbf1}, \fmeas_t \ran- \lan{\mathbf1}, \fmeas
_0\ran
+ \int_0^t \lan h^s, \fmeas_s \ran \,ds.
\end{equation}
Indeed, this is simply the mass balance equation for the
fluid in service and can be derived from (\ref{eq-fk}), (\ref{eq-fx}) and
(\ref{eq-fnonidling}).
Moreover, combining (\ref{eq-fk2}) and~(\ref{eqnnu}), with $f = {\mathbf1}$, and using an
integration-by-parts argument (see Corollary~4.2 of \cite{kanram08b}),
it is easy to see that
$\fk$ satisfies the renewal equation
%
%
\begin{eqnarray}
\label{disfkl}
\fk(t) & = & \lan{\mathbf1}, \fmeas_t \ran- \lan{\mathbf1},
\fmeas_0 \ran+\int_{[0,H^s )}
\frac{G^s(x+t)- G^s(x)}{1 - G^s(x)} \fmeas_0 (dx) \nonumber\\[-8pt]\\[-8pt]
& &{} + \int_0^t
g^s(t-s)\fk(s) \,ds.\nonumber
\end{eqnarray}
Since the first two terms on the right-hand side are bounded,
by the key renewal theorem (see, e.g., Theorem 4.3 in Chapter V of \cite
{asmbook}),
$\fk$ admits the representation
%
%
\begin{eqnarray}
\label{renewfk1}
\fk(t) &=& \lan{\mathbf1}, \fmeas_{t} \ran- \lan{\mathbf1}, \fmeas
_0 \ran+\int_{[0,H^s )}
\frac{G^s(x+t)- G^s(x)}{1 - G^s(x)} \fmeas_0 (dx) \nonumber\\
&&{} + \int_0^t \biggl( \lan{\mathbf1}, \fmeas_{t-s} \ran- \lan
{\mathbf1}, \fmeas_0 \ran\\
&&\hspace*{30.8pt}{}+\int_{[0,H^s )}
\frac{G^s(x+t-s)- G^s(x)}{1 - G^s(x)} \fmeas_0 (dx) \biggr)
u^s(s) \,ds,
\nonumber
\end{eqnarray}
where $u^s$ is the density of the renewal function $U^s$ associated with
$G^s$
($u^s$ exists because $G^s$ is assumed to have a density).
Also, it will prove convenient to introduce the fluid
queue length process $\fq$ defined by
%
%
\begin{equation}
\label{eq-fq}
\fq(t) \doteq[\fx(t) - 1]^+,\qquad t \in[0,\infty).
\end{equation}
For every $t\in[0,\infty)$, the inequality in (\ref{fq-eta}) implies
%
%
\begin{equation}
\label{eq-fqfreneg}
\fq(t) \leq\lan{\mathbf1}, \freneg_t \ran,
\end{equation}
and
(\ref{eq-fk}) and (\ref{eq-fq}), when combined,
show that
%
%
\begin{equation}\label{qt-conserve}
\fq(0)+\fe(t)=\fq(t)+\fk(t)+\fr(t).
\end{equation}

The fluid equations without abandonment
can be defined in a similar fashion.
Let
%
%
\begin{eqnarray}
\label{def-newspace2}
\tilde\newspace&\doteq&\{ (e,x,\nu) \in\incspace\times\R_+
\times\mmfs\dvtx\nonumber\\[-8pt]\\[-8pt]
&&\hspace*{102.8pt}1 - \lan{\mathbf1}, \nu\ran= [1-x]^+ \}.\nonumber
\end{eqnarray}
\begin{defn} \label{defwa}
Given any $(\fe, \fx(0), \fmeas_0) \in\tilde\newspace$,
we say $(\fx, \fmeas) \in\R_+ \times{\cal M}_F[0, H^s)$
is a solution to the associated fluid equations in the absence of
abandonment if for every $t \in[0,\infty)$,
the second inequality in (\ref{cond-radon}) holds, and equations
(\ref{eqnnu}), (\ref{eq-fk}), (\ref{eq-fx}) and (\ref{eq-fnonidling})
hold with
$\fr\equiv0$.
\end{defn}
%
%
\begin{remark}
The case when customers do not renege corresponds to the case when
the patience time distribution $G^r$ has unit mass at $\infty$.
Formally setting $dG^r = \delta_{\infty}$ in Definition \ref{def-fleqns},
we obtain the fluid limit
equations in the absence of abandonment specified in Definition \ref
{defwa} (also refer to Definition 3.3 in \cite{kasram07}). In fact,
in this case $G^r(x)=0$ and hence, $h^r(x)=0$ for all $x\in[0,\infty
)$. From this and (\ref{eq-fr}) we see that $\fr(t)=0$ for all $t\geq
0$. Also, note that (\ref{cond-radon}), (\ref{eqnnu}), (\ref{eq-fx}),
(\ref{eq-fnonidling}) and (\ref{eq-fk2}) are equivalent to
(3.4)--(3.8) of Definition 3.3 in \cite{kasram07}. At last, by letting
$f={\mathbf1}$ in (\ref{eqneta}), since $G^r$ is zero on $[0,\infty
)$, we have $\lan{\mathbf1}, \freneg_t \ran= \lan{\mathbf1},
\freneg_0 \ran+ \fe(t)$. On the other hand, by (\ref{eq-fx}) and~(\ref{def-newspace}), we have
\[
[\fx(t)-1]^+ \leq\bigl[[\fx(0)-1]^+ + \fe(t)\bigr]^+ \leq
[\lan{\mathbf1}, \freneg_0 \ran+ \fe(t)]^+=\lan
{\mathbf1}, \freneg_t \ran.
\]
This shows that (\ref{fq-eta}) holds automatically when there is no
abandonment.
\end{remark}

\subsection{Invariant manifold} \label{secIM}
We now introduce a set of states associated with the fluid equations
described in Definition \ref{def-fleqns}, which\vadjust{\goodbreak} we call the \textit
{invariant manifold}. As shown in Section \ref{secconv}, when the
invariant manifold consists of a single point, it is the limit of the
scaled sequence of convergent stationary distributions $(\fxns
,\fmeasns,\frenegns)=\frac{1}{N}(\xns,\measns,\renegns)$.
\begin{defn}[(Invariant manifold)]
\label{def-IS}
Given $\lambda\in(0,\infty)$, a state
$(x_0,\meas_0,\allowbreak\reneg_0)\in\R_+ \times\mmfs\times\mmfr$ such
that $(\lambda{\mathbf1}, x_0, \meas_0,\reneg_0)\in\newspace$ is
said to
be invariant for the fluid equations described in Definition \ref
{def-fleqns} with arrival rate $\lambda$
if the solution $(\fx,\fmeas,\freneg)$ to the fluid equations
associated with
the input data $(\lambda{\mathbf1}, x_0, \meas_0,\reneg_0)$
satisfies $(\fx(t),\fmeas_t,\freneg_t)= (x_0,\meas_0,\reneg_0)$
for all $t
\geq0$. The set of all invariant states for
the fluid equation with rate $\lambda$ will be referred to as
the \textit{invariant manifold} (associated with the fluid equations with
rate $\lambda$).
\end{defn}
\begin{theorem}[(Characterization of the invariant manifold)]
\label{th-invman}
Given $\lambda\in(0,\infty)$, the set ${\cal I}_\lambda$ defined in
$(\ref{eq-invman})$ is the invariant manifold
associated with the fluid equations with arrival rate $\lambda$.
\end{theorem}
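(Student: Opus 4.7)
The plan is to establish the two inclusions ${\cal I}_\lambda \subset$ invariant manifold and invariant manifold $\subset {\cal I}_\lambda$ separately. For the forward direction, given $(\invx, \mu_0, \pi_0) \in {\cal I}_\lambda$, I would exhibit the constant candidate $\fx(t) \equiv \invx$, $\fmeas_t \equiv \mu_0$, $\freneg_t \equiv \pi_0$ with $\fe(t) = \lambda t$, $\fr(t) = (\lambda-1)^+ t$ and $\fk(t) = (\lambda \wedge 1)\, t$, and then verify (\ref{cond-radon})--(\ref{fq-eta}). The only nontrivial step is (\ref{eq-fr}): since $\pi_0 = \lambda \invrenegs$, the change of variables $u = (F^{\lambda\invrenegs})^{-1}(y)$ (under which $dy = \lambda(1-G^r(u))\,du$) transforms
\[
\int_0^{[\invx-1]^+} h^r\!\left((F^{\pi_0})^{-1}(y)\right) dy \;=\; \lambda G^r(\chi_*), \qquad \chi_* \doteq (F^{\lambda\invrenegs})^{-1}((\invx-1)^+),
\]
which equals $(\lambda-1)^+$ by the defining property of $B_\lambda$ (with $\chi_* = 0$, $G^r(0)=0$ covering the subcritical case). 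All other fluid equations reduce to direct substitution --- in particular $\lan h^s, \invmeas \ran = \int_0^{H^s} g^s(x)\,dx = 1$ for (\ref{eq-fx}), and for (\ref{eqn:nu})--(\ref{eqn:eta}) the telescoping identity
\[
\int_{[0,H)} f(x+t)\,\frac{1-G(x+t)}{1-G(x)}\,c(1-G(x))\,dx \;+\; c\int_0^t f(t-s)(1-G(t-s))\,ds \;=\; c\int_0^\infty f(y)(1-G(y))\,dy,
\]
obtained from the substitutions $y=x+t$ and $u=t-s$, applied with $(G,c) = (G^s, \lambda\wedge 1)$ and $(G^r, \lambda)$ respectively. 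Uniqueness of the fluid solution from \cite{kanram08b} then forces this constant tuple to be the fluid solution, showing invariance.

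For the reverse inclusion, suppose $(\fx(t), \fmeas_t, \freneg_t) \equiv (\invx, \mu_0, \pi_0)$ with arrival rate $\lambda$. Constancy of the state immediately gives, via (\ref{eq-fr}), that $\fr(t) = c\,t$ with $c \doteq \int_0^{[\invx-1]^+} h^r((F^{\pi_0})^{-1}(y))\,dy \geq 0$, and (\ref{eq-fk}) then produces $\fk(t) = (\lambda - c)\,t$. Choosing $f \in {\cal C}_c[0,H^r)$ in (\ref{eqn:eta}) and sending $t\to\infty$ kills the initial-condition term (since $f(x+t)=0$ eventually for every $x$) and, by a change of variable $u=t-s$ combined with dominated convergence (legitimized by $1-G^r \in L^1$ from Assumption \ref{as-mean}), reduces the second term to $\lambda \lan f, \invrenegs \ran$. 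Since a finite Borel measure on $[0,H^r)$ is determined by its integrals against ${\cal C}_c[0,H^r)$, this forces $\pi_0 = \lambda\invrenegs$. The same argument applied to (\ref{eqn:nu}) with $\fk(t) = (\lambda-c)\,t$ gives $\mu_0 = (\lambda-c)\invmeas$, so in particular $\lan \f1, \mu_0 \ran = \lambda - c$.

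The non-idling condition (\ref{eq-fnonidling}) now reads $\lambda - c = 1 - [1-\invx]^+$, which produces a dichotomy. If $\invx < 1$, then $[\invx-1]^+ = 0$ makes $c = 0$, so $\invx = \lambda < 1$ and $(\invx,\mu_0,\pi_0) = (\lambda, \lambda\invmeas, \lambda\invrenegs)$. If $\invx \geq 1$, then $\lambda - c = 1$, so $\mu_0 = \invmeas$ and $c = \lambda - 1 \geq 0$ (forcing $\lambda \geq 1$); re-applying the change of variables used in the forward direction to the defining expression for $c$ yields $\lambda G^r\!\left((F^{\lambda\invrenegs})^{-1}(\invx-1)\right) = \lambda - 1$, i.e.\ $\invx \in B_\lambda$. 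Either way $(\invx, \mu_0, \pi_0) \in {\cal I}_\lambda$, completing the proof.

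The principal technical hurdle is the $t\to\infty$ passage in (\ref{eqn:nu}) and (\ref{eqn:eta}) when $H^s$ or $H^r$ may be infinite: restricting to ${\cal C}_c[0,H)$ test functions (rather than the bounded Borel functions allowed by Definition \ref{def-fleqns}) eliminates the initial-condition contribution exactly, and the finite-mean condition in Assumption \ref{as-mean} supplies the integrable dominating function needed to pass the limit through the renewal convolution. The boundary case $\lambda = 1$, $\invx = 1$ sits in both subcases and is reconciled by $\chi_* = 0$ together with $G^r(0)=0$.
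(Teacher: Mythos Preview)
Your proof is correct and follows essentially the same route as the paper: the reverse inclusion is exactly the content of Lemmas \ref{lem-reneg} and \ref{lem-meas}, and your forward verification is the ``direct substitution'' the paper alludes to at the end of Lemma \ref{lem-meas}. The only noteworthy difference is your choice of ${\cal C}_c[0,H)$ test functions in the $t\to\infty$ argument, which makes the initial-condition term vanish identically for large $t$; the paper instead takes $f\in{\cal C}_b(\R_+)$ and invokes dominated convergence on the first term via $\frac{1-G(x+t)}{1-G(x)}\to 0$. Both work, and your explicit change-of-variables computation for (\ref{eq-fr}) is cleaner than the paper's one-line dismissal of the forward direction.
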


Theorem \ref{th-invman} is a consequence of the next two lemmas.
Let $\lambda\in(0,\infty)$ and $(x_0,\meas_0,\reneg_0)$ be an
invariant state according to Definition \ref{def-IS}. Then the unique
solution $(\fx,\fmeas,\freneg)$ to the fluid equations associated
with the input data $(\lambda{\mathbf1}, x_0,\nu_0,\eta_0)\in
\newspace$ satisfies $(\fx(t),\fmeas_t,\freneg_t)= (x_0,\meas
_0,\reneg_0)$ for all $t \geq0$. Let~$\fq$, $\fr$, $\fk$ be the
associated auxiliary processes satisfying (\ref{eq-fq}), (\ref{eq-fr}),
(\ref{eq-fk}), and recall the definition of the measures
$\nu_*$ and $\eta_*$ given in (\ref{def-invmeas}) and (\ref
{def-invrenegs}), respectively.
\begin{lemma} \label{lem-reneg}
If $(x_0,\meas_0,\reneg_0)$ is an invariant state, then
$\reneg_0(dx)=\lambda(1-G^r(x))\,dx=\lambda\eta_*(dx)$.
\end{lemma}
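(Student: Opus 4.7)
The plan is to exploit the representation formula (\ref{eqn:eta}) together with the fact that, by invariance, $\freneg_t = \reneg_0$ for every $t \geq 0$ and $\fe(t) = \lambda t$, and then feed in indicator test functions with compact support to peel off the initial-data term.

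More concretely, I would fix any $a \in (0, H^r)$ and take $f = \ind_{[0,a]}$ in equation (\ref{eqn:eta}). Using $\freneg_t = \reneg_0$ on both sides and $d\fe(s) = \lambda\, ds$, one obtains, for every $t \geq 0$,
\begin{equation*}
\reneg_0[0,a] \;=\; \int_{[0,H^r)} \ind_{[0,a]}(x+t)\,\frac{1-G^r(x+t)}{1-G^r(x)}\,\reneg_0(dx) \;+\; \lambda \int_0^t \ind_{[0,a]}(t-s)\,(1-G^r(t-s))\, ds.
\end{equation*}
Now I would choose $t > a$. For such $t$, $\ind_{[0,a]}(x+t) = 0$ for every $x \geq 0$, so the first term on the right vanishes identically (not merely in the limit), which is the cleanest way to avoid any issue of whether $H^r$ is finite or infinite. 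After the substitution $u = t - s$, the second term becomes $\lambda \int_0^a (1 - G^r(u))\, du = \lambda\,\invrenegs[0,a]$.

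Combining these observations yields $\reneg_0[0,a] = \lambda\,\invrenegs[0,a]$ for every $a \in (0,H^r)$; since $\reneg_0$ is concentrated on $[0,H^r)$ by assumption, this determines $\reneg_0$ uniquely on all Borel subsets of $[0,H^r)$, giving $\reneg_0(dx) = \lambda(1-G^r(x))\,dx = \lambda\,\invrenegs(dx)$, as desired.

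There is essentially no serious obstacle here; the only subtlety worth flagging is confirming that the fluid equation (\ref{eqn:eta}) from Definition \ref{def-fleqns} is valid for bounded Borel measurable $f$ (not only for $f \in \ecc$), which is precisely the content of the parenthetical remark following Definition \ref{def-fleqns} justifying the monotone class extension from continuous compactly supported functions to bounded Borel functions.
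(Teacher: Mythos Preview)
Your proof is correct and follows the same core strategy as the paper's: substitute the invariance condition $\freneg_t = \reneg_0$ into the representation formula (\ref{eqn:eta}) and eliminate the initial-data term. The paper carries this out with $f \in \cal C_b(\R_+)$ and then sends $t \to \infty$, invoking dominated convergence to kill the first term (the integrand is bounded by $\nrm{f}_\infty$ and $(1-G^r(x+t))/(1-G^r(x)) \to 0$ pointwise) together with Assumption \ref{as-mean} to guarantee convergence of the second term to $\lambda \int_0^\infty f(s)(1-G^r(s))\,ds$. Your variant with $f = \ind_{[0,a]}$ and a fixed $t > a$ is slightly more elementary: the first term vanishes identically rather than only in the limit, and no appeal to the finite-mean condition is needed. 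Both arguments are short and valid; yours avoids the limiting step at the cost of relying on the monotone-class extension of (\ref{eqn:eta}) to bounded Borel $f$, which, as you note, is exactly what the paper justifies after Definition \ref{def-fleqns}.
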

\begin{pf}
On substituting the relation $\reneg_t = \reneg_0, t\geq0$, into
(\ref{eqneta}), we see that for every $f \in\cal C_b(\R_+)$ and
$t\in[0,\infty)$,
%
%
\begin{eqnarray}
&&\int_{[0,H^r)} f (x) \reneg_0 (dx) \nonumber\\
&&\qquad= \int_{[0,H^r)} f(x+t)
\frac{1 - G^r(x+t)}{1 - G^r(x)} \reneg_0 (dx)\\
&&\qquad\quad{} + \lambda\int_0^t
f(s) \bigl(1 - G^r(s)\bigr) \,ds.
\nonumber
\end{eqnarray}
Sending $t\rightarrow\infty$ and
applying the dominated convergence theorem, the first term vanishes and
we obtain
\[
\int_{[0,H^r)} f (x) \reneg_0 (dx)=\lambda\int_0^\infty f(s) \bigl(1
- G^r(s)\bigr) \,ds=\int_{[0,H^r)} f(s) \lambda\bigl(1 - G^r(s)\bigr) \,ds.
\]
It then follows that $\reneg_0(dx)=\lambda\eta_*(dx)$.
\end{pf}
%
%
\begin{lemma} \label{lem-meas} If $(x_0,\meas_0,\reneg_0)$ is an invariant
state, then
$\meas_0(dx)=(\lambda\wedge1)\nu_*(dx)$, $x_0=\lambda$ if $\lambda<1$
and $x_0\in B_\lambda$ if $\lambda\geq1$. Moreover,
if either $x_0 = \lambda< 1$, or
$\lambda> 1$ and $x_0 \in B_{\lambda}$, then $(x_0,(\lambda\wedge
1)\nu_*, \lambda\reneg_*)$ is an invariant state.
\end{lemma}
\begin{pf}
Suppose $(x_0,\meas_0,\reneg_0)$ is an invariant state.
Since $\fx(t)=x_0$, we have $\fq(t)=\fq(0)$ by (\ref{eq-fq}).
Since, in
addition, $\freneg_t=\reneg_0=\lambda\eta_*$
by Lemma \ref{lem-reneg}, we have
\[
\int_0^{[\fx(t) - 1]^+} h^r (( F^{\freneg_t} )^{-1}
(y)) \,dy= \int_0^{[x_0 - 1]^+} h^r (( F^{\lambda
\invrenegs})^{-1}(y)) \,dy.
\]
Let $p$ denote the term on the right-hand side of the above display.
Then for each \mbox{$t\geq0$}, by (\ref{eq-fr}) we have $\fr(t)=pt$ and by
(\ref{qt-conserve}) we have $\fk(t)=(\lambda-p)t$. Substituting
$\fmeas_t = \meas_0$ in (\ref{eqnnu}), we obtain for every $f \in
\cal C_b(\R_+)$ and $t\in[0,\infty)$,
%
%
\begin{eqnarray}
&&\int
_{[0,H^s)} f (x) \meas_0 (dx) \nonumber\\
&&\qquad= \int_{[0,H^s)} f(x+t)
\frac{1 - G^s(x+t)}{1 - G^s(x)} \meas_0 (dx)
\\
&&\qquad\quad{}+ \int_0^t f(s) \bigl(1 -
G^s(s)\bigr)(\lambda-p) \,ds.\nonumber
\end{eqnarray}
Sending $t\rightarrow\infty$ and applying
the dominated convergence theorem, we obtain
\begin{eqnarray*}
\int_{[0,H^s)} f (x) \meas_0 (dx)
&=&(\lambda-p) \int_0^\infty f(s) \bigl(1 -
G^s(s)\bigr) \,ds\\
&=&(\lambda-p) \int_{[0,H^r)} f(s) \bigl(1 - G^s(s)\bigr) \,ds.
\end{eqnarray*}
Thus,
$\meas_0(dx)=(\lambda-p) \nu_*(dx)$ and hence, $\lan{\mathbf1},
\meas_0 \ran= \lambda
- p$.

To show that $\meas_0(dx)=(\lambda\wedge1)\nu_*(dx)$, it suffices
to show
that $\lambda-p= \lan{\mathbf1}, \meas_0 \ran= \lambda\wedge1$.
If $x_0\leq1$,
then $p=0$ by its definition. Hence, $\meas_0(dx)=\lambda\nu_*(dx)$ and
$\lambda= \lan{\mathbf1},\meas_0 \ran\leq1$. Thus, in this case,
$\lambda-p=\lambda\wedge1$. On the other hand, if $x_0> 1$, it
follows from
(\ref{eq-fnonidling}) that $\lan{\mathbf1},\meas_0 \ran= 1$. Since
we also have
$\lan{\mathbf1},\meas_0 \ran= \lambda-p$, it follows that $\lambda
= p + 1 \geq
1$. Thus, in
this case too, we have $\lambda-p=\lambda\wedge1$. This proves the
first assertion of the lemma.

For the second assertion of the lemma, we observe that when $\lambda
<1$, the equality $\lambda-p=\lambda\wedge1$ implies $p=0$ and $\lan
{\mathbf1},\meas_0 \ran= \lambda< 1$. Hence, (\ref{fnonidling})
implies $x_0=\lan{\mathbf1},\meas_0 \ran= \lambda$. If $\lambda
\geq1$, we have $\meas_0(dx)= \nu_*(dx)$ and the equality $\lambda
-p=\lambda\wedge1$ implies $p=\lambda-1$. Then $x_0\geq\lan
{\mathbf1}, \meas_0 \ran=1$ and
\[
\lambda G^r\bigl((F^{\lambda
\invrenegs})^{-1}\bigl((x_0-1)^+\bigr)\bigr)=\int
_0^{(x_0-1)^+}h^r((F^{\lambda
\invrenegs})^{-1}(y))\,dy=p=\lambda-1.
\]
Hence, $x_0$ belongs to the set
$B_\lambda$ defined in (\ref{eq-fp}).
The last assertion can be
verified
directly by substituting the initial condition into the fluid equations.
This completes the proof of the lemma.\vadjust{\goodbreak}
\end{pf}

\section{The limit of scaled stationary distributions}
\label{secconv}

This section is devoted to the proof of Theorem
\ref{thm-convstat}.
Suppose Assumptions\vspace*{1pt}
\ref{as-mean} and \ref{assinterdis} hold and let
$\fyns=(\overline{\alpha}_{E,*}^{(N)},\fxns,\fmeasns,\frenegns)$,
$N\in\N$, be a sequence of scaled stationary distributions for the $N$-server
queue, which exists by Theorem \ref{thm-SD}.
When Assumption \ref{ass-unique} also holds,
let
$(x_*, (\lambda\wedge1)\nu_*,\lambda\eta_*)$ be the unique element
of the invariant manifold ${\cal I}_\lambda$. The main result of this
section is to show that, as $N\ra\infty$,
%
%
\begin{equation}\label{Sdconv} \bigl(\fxns
,\fmeasns,\frenegns\bigr) \Rightarrow\bigl(x_*, (\lambda\wedge1)\nu
_*,\lambda\eta_*\bigr).
\end{equation}

We first show in Section \ref{sectight} that
the sequence $\{(\fxns,\fmeasns,\frenegns), N\in\N\}$ is tight.
Then, in
Section \ref{secsubconv}, we show that (without imposing
Assumption~\ref{ass-unique}) the weak
limit of every convergent subsequence must almost surely be an
invariant state.
When there is a unique invariant state, this proves~(\ref{Sdconv}).
Note that the method of proof
does not explicitly require that the stationary distribution
for each $N$-server queue be unique. For each
$N \in\N$, recall the definition given in (\ref{fl-scaling})
of the fluid-scaled state process
%
%
\begin{equation}
\label{disbarY}
\fyn= \bigl(\overline{\alpha}{}^{(N)}_E,\fxn,\fmeasn,\frenegn\bigr)
\end{equation}
for the
$N$-server queue with abandonment associated with the initial condition
$\fyn(0)=(\overline{\alpha}{}^{(N)}_{E,*},\fxns,\fmeasns,\frenegns)$.
Let $\fqn, \frn, \fkn$ be the fluid-scaled auxiliary processes
associated with $\fyn$ that were introduced in Section \ref{secrepdyn}.

\subsection{Tightness} \label{sectight} To establish tightness of
the sequence
$\{\fyns\}_{N \in\N}$, we will make
use of the criteria for tightness of measure-valued random variables
given in Proposition \ref{proptight}.
\begin{lemma}
Let $c\in[0,H^r)$. Then, for each integer $n\geq2$,
$\frenegns$ and $\fmeasns$ satisfy the following relations:
%
%
\begin{eqnarray}\label{dis-stateta}
&&\E\bigl[\frenegns[c,H^r)
\bigr]\nonumber\\[-2pt]
&&\qquad=\E\biggl[\int_{[0,H^r)}\frac
{1-G^r(x+nc)}{1-G^r(x)}\frenegns(dx)\biggr] \\[-2pt]
&&\qquad\quad{}+ \E\Biggl[\int_{[0,c]}\sum_{j=2}^n\bigl(1-G^r(jc-s)\bigr)\,d\fen(s)\Biggr],
\nonumber\\[-2pt]
%
\label{dis-statnu}
&&\E\bigl[\fmeasns[c,H^s)
\bigr]\nonumber\\[-2pt]
&&\qquad=\E\biggl[\int_{[0,H^s)}\frac
{1-G^s(x+nc)}{1-G^s(x)}\fmeasns(dx)\biggr] \\[-2pt]
&&\qquad\quad{}+ \E\biggl[\int_{[0,c]}\sum_{j=2}^n\bigl(1-G^s(jc-s)\bigr)\,d\fkn(s)\biggr].
\nonumber\vadjust{\goodbreak}
\end{eqnarray}
\end{lemma}
\begin{pf}
We only prove (\ref{dis-stateta}) because (\ref{dis-statnu}) can be
proved in the same~way. Fix $c\in[0,H^r)$. Dividing both sides of
(\ref{dis-key1}) by $N$ and setting $\frenegn_0=\frenegns$,
we obtain for each bounded measurable function $f$ on $\R_+$ and $t>0$,
%
%
\begin{eqnarray} \label{dis-key} \E\bigl[\bigl\lan f, \frenegn_t \bigr\ran
\bigr] & = & \E\biggl[\int_{[0,H^r)}f(x+t)\frac
{1-G^r(x+t)}{1-G^r(x)}\frenegns(dx)\biggr]
\nonumber\\[-8pt]\\[-8pt]
& &{} + \E\biggl[\int_{[0,t]}f(t-s)\bigl(1-G^r(t-s)\bigr)\,d\fen(s)\biggr].
\nonumber
\end{eqnarray}
Since the initial conditions are stationary, $\frenegn_t$ has the same
distribution as $\frenegns$ for every $t\geq0$. Therefore, by substituting
$f=\indd_{[c,H^r)}$ and $t=c$ in~(\ref{dis-key}), and noting that
$F^{(N)}(0)=0$, $\indd_{[c,H^r)}(x+c)=1$ for every $x\geq0$ and
$\indd_{[c,H^r)}(c-s)=0$ for every $s\in(0,c]$, we obtain
\begin{eqnarray*}
\E\bigl[\frenegns[c,H^r)\bigr]&=&\E\bigl[\frenegn
_c[c,H^r)\bigr] \\
&=&\E\biggl[\int_{[0,H^r)}\frac
{1-G^r(x+c)}{1-G^r(x)}\frenegns(dx)\biggr]\\
&=&\E\biggl[\int_{[0,H^r)}\frac{1-G^r(x+c)}{1-G^r(x)}\frenegn_c(dx)\biggr].
\end{eqnarray*}
Next, choosing $f=(1-G^r(\cdot+c))/(1-G^r(\cdot))$ and $t=c$ in (\ref
{dis-key}), we obtain
\begin{eqnarray*} \E\biggl[\int_{[0,H^r)}\frac
{1-G^r(x+c)}{1-G^r(x)}\frenegn_c(dx)\biggr] &=&\E\biggl[\int
_{[0,H^r)}\frac{1-G^r(x+2c)}{1-G^r(x)}\frenegns(dx)\biggr]\\
& &
{}+ \E\biggl[\int_{[0,c]}\bigl(1-G^r(2c-s)\bigr)\,d\fen(s)\biggr].
\end{eqnarray*}
Combining the last two displays, we see that
\begin{eqnarray*} \E\bigl[\frenegns[c,H^r)\bigr] &=&\E\biggl[\int
_{[0,H^r)}\frac{1-G^r(x+2c)}{1-G^r(x)}\frenegns(dx)\biggr]\\
& &
{}+ \E\biggl[\int_{[0,c]}\bigl(1-G^r(2c-s)\bigr)\,d\fen(s)\biggr].
\end{eqnarray*}
Thus, we have shown that (\ref{dis-stateta}) holds for $n=2$. Suppose
that for some integer $m\geq2$, (\ref{dis-stateta}) holds for $n=m$,
that is,
%
%
\begin{eqnarray} \label{dis-ind} \E\bigl[\frenegns[c,H^r)\bigr] &=&\E
\biggl[\int_{[0,H^r)}\frac{1-G^r(x+mc)}{1-G^r(x)}\frenegns(dx)\biggr]\nonumber\\[-8pt]\\[-8pt]
& &
{}+ \E\biggl[\int_{[0,c]}\sum_{j=2}^m\bigl(1-G^r(jc-s)\bigr)\,d\fen(s)\biggr].
\nonumber
\end{eqnarray}
Substituting $f=(1-G^r(\cdot+mc))/(1-G^r(\cdot))$ and $t=c$ in (\ref
{dis-key}) and using the fact that $\frenegn_c$ has the same
distribution as $\frenegns$, we obtain
%
%
\begin{eqnarray}
&&\E\biggl[\int_{[0,H^r)}\frac{1-G^r(x+mc)}{1-G^r(x)}\frenegns
(dx)\biggr] \nonumber\\
&&\qquad= \E\biggl[\int_{[0,H^r)}\frac
{1-G^r(x+mc)}{1-G^r(x)}\frenegn_c(dx)\biggr] \nonumber\\[-8pt]\\[-8pt]
&&\qquad=\E\biggl[\int_{[0,H^r)}\frac{1-G^r(x+(m+1)c)}{1-G^r(x)}\frenegns(dx)\biggr]
\nonumber\\
&&\qquad\quad{}+ \E\biggl[\int_{[0,c]}\bigl(1-G^r\bigl((m+1)c-s\bigr)\bigr)\,d\fen(s)\biggr].\nonumber
\end{eqnarray}
This, together with (\ref{dis-ind}), yields (\ref{dis-stateta}) with
$n=m+1$. This completes the induction argument and we have the desired
result.
\end{pf}
\begin{theorem} \label{thm-tight}
If Assumptions \ref{as-mean} and \ref{assinterdis} are satisfied and
$\overline{\lambda}{}^{(N)} \ra\lambda\in(0,\infty)$, then
the sequence $\{(\fxns,\fmeasns,\frenegns)\}_{N\in\N}$ is tight.
Moreover,
%
%
\begin{equation}\label{disSrenegn}
\sup_{N\in\N}\E\bigl[\bigl\lan
{\mathbf1},\frenegns\bigr\ran\bigr] <\infty.
\end{equation}
\end{theorem}
\begin{pf}
We first show that $\{\frenegns\}_{N\in\N}$ is tight.
Note that $\lan{\mathbf1}, \frenegns\ran$ can be viewed as the
fluid scaled queue
length\vspace*{1pt} process associated with an infinite-server queue with arrival process
$\fen$ and service distribution function $G^r$. By Little's law (cf.
Theorem 2
of \cite{little}), we know that $\E[\lan{\mathbf1}, \frenegns\ran
]=\overline
\lambda{}^{(N)}\theta^r$, where $\theta^r$, the mean of $G^r$, is finite
by Assumption \ref{as-mean}. Due to the assumed convergence of
$\overline\lambda{}^{(N)}$ to $\lambda$, this implies (\ref{disSrenegn}).

Next, note that for each $n$, the function $(1-G^r(\cdot
+nc))/(1-G^r(\cdot))$ is
bounded by $1$ and converges to $0$ as $n\rightarrow\infty$.
Therefore, it follows from the dominated convergence theorem that
%
%
\begin{equation}\label{dis-01}\lim_{n\ra
\infty}\E\biggl[\int_{[0,H^r)}\frac{1-G^r(x+nc)}{1-G^r(x)}
\frenegns(dx)\biggr]=0.
\end{equation}
Sending $n\ra\infty$ on the right-hand side of (\ref{dis-stateta}),
and using
(\ref{dis-01}) and the monotone convergence theorem, we have
%
%
\begin{equation}\label{dis-02}\E\bigl[\frenegns[c,H^r)\bigr]=\E\Biggl[\int
_{[0,c]}\sum
_{j=2}^\infty\bigl(1-G^r(jc-s)\bigr)
\,d\fen(s)\Biggr].
\end{equation}
On the other hand, we also have the simple estimate
%
%
\begin{eqnarray}
\label{dis-ser1}
\E\biggl[\int_{[0,c]}\bigl(1-G^r(2c-s)\bigr) \,d\fen(s)\biggr]
&\leq&\bigl(1-G^r(c)\bigr)\E\bigl[\fen(c)\bigr]\nonumber\\[-8pt]\\[-8pt]
&=& c\bigl(1-G^r(c)\bigr)\frac{\E[\fen(c)]}{c}.
\nonumber
\end{eqnarray}
Carrying out integration-by-parts on $\int_0^\infty(1-G^r(x))\,dx$,
it follows that
\[
\int_{[0,H^r)}\bigl(1-G^r(x)\bigr) \,dx = \lim_{x\ra H^r}
x\bigl(1-G^r(x)\bigr)+\int_{[0,H^r)}xg^r(x) \,dx.
\]
However, since the mean $\theta^r$ is
finite by (\ref{def-mean1}), it follows that $c(1-G^r(c))\rightarrow
0$ as
$c\ra H^r$. In addition, because the elementary renewal theorem implies
that $\E[\fen(c)]/c \rightarrow\overline
\lambda{}^{(N)}$ as $c\ra\infty$ and
$\overline\lambda{}^{(N)}\ra\lambda$ as $N\ra\infty$, it follows
that
%
%
\begin{equation}\label{unif-bound} \limsup_{c\ra
H^r}\sup_N\frac{\E[\fen(c)]}{c} < \infty.
\end{equation}
Thus, taking first the supremum over $N$ and then the limit as $c \ra
H^r$ in~(\ref{dis-ser1}), we obtain
%
%
\begin{equation}\label{dis-1}\lim_{c\ra H^r}\sup_N\E\biggl[\int
_{[0,c]}\bigl(1-G^r(2c-s)\bigr) \,d\fen(s)\biggr] = 0.
\end{equation}
Since $1-G^r(\cdot)$ is a decreasing function, for $s\in[0,c]$,
\[
\sum_{j=3}^\infty c\bigl(1-G^r(jc-s)\bigr)\leq\int_{[2c-s,H^r)}\bigl(1-G^r(x)\bigr) \,dx
\leq
\int_{[c,H^r)} \bigl(1-G^r(x)\bigr) \,dx.
\]
Therefore, we have
\begin{eqnarray*}
&&\sup_N\E\Biggl[\int_{[0,c]}\sum_{j=3}^\infty\bigl(1-G^r(jc-s)\bigr) \,d\fen
(s)\Biggr] \\
&&\qquad\leq
\sup_N\frac{\E[\fen(c)]}{c}\int_{[c,H^r)} \bigl(1-G^r(x)\bigr)
\,dx,
\end{eqnarray*}
which tends to zero as $c\ra H^r$
because of (\ref{unif-bound}) and Assumption \ref{as-mean}.
Combining the last assertion with (\ref{dis-02}) and (\ref{dis-1}),
we see that
%
%
\begin{equation}\label{disser2}\lim_{c\ra H^r}\sup_N \E
\bigl[\frenegns[c,H^r)\bigr] = 0,
\end{equation}
which establishes the second
criterion for tightness. Thus, the sequence $\{\frenegns\}_{N\in\N}$
is tight.

We next\vspace*{1pt} show that $\{\fmeasns\}_{N\in\N}$ is tight. The analog of
(\ref{disSrenegn}) holds for $\{\fmeasns\}_{N\in\N}$ automatically
because $\lan{\mathbf1}, \fmeasns\ran\leq1$ for each $N$. On the
other hand, the analog of (\ref{disser2}) can be shown to hold for $\{
\fmeasns\}_{N\in\N}$ by using~(\ref{dis-statnu}) and an argument
similar to that used above to establish (\ref{disser2}), along with
the additional observation that $\E[\fkn(c)]\leq\E[\fen(c)]+\E
[\lan{\mathbf1}, \frenegns\ran]$ implies $\limsup_{c\ra H^s}\sup
_N\E[\fkn(c)]/ c<\infty$. Thus, the sequence $\{\fmeasns\}_{N\in\N
}$ is also tight.

Finally, we show that the sequence of $\R_+$-valued random variab-\break les~$\{\fxns\}_{N\in\N}$~is~tight. Since $\fxns\leq1+\lan{\mathbf1},
\frenegns\ran$ for each $N$, $\sup_N\E[\fxns]\leq1+ \sup_N\E
[\lan{\mathbf1}$, $\frenegns\ran]$, which is finite due to (\ref
{disSrenegn}). The tightness of $\{\fxns\}_{N\in\N}$ is a direct
consequence of Markov's inequality.
\end{pf}

\subsection{The limit of the stationary distributions} \label{secsubconv}

We now present the proof of our main result.
\begin{pf*}{Proof of Theorem \ref{thm-convstat}}
For each\vspace*{-2pt} $N \in\N$, let $\fyns=
(\overline{\alpha}{}^{(N)}_{E,*},\fxns,\fmeasns$, $\frenegns)$ be
a fluid scaled stationary distribution for the $N$-server
system.\vspace*{1pt}
We will invoke the fluid limit theorem established in Theorem 3.6 of
\cite{kanram08b}
to establish the result. For each $N \in\N$,
let $\overline{Z}{}^{(N)} = (\fxn,\fmeasn,\frenegn)$
be the (fluid scaled) state process for the $N$-server queue with initial
data $(\fen_*, \fxns, \fmeasns,
\frenegns)$. Since the hazard rate functions $h^s$ and $h^r$ satisfy
Assumption \ref{ass-h} (which corresponds to Assumption 3.3 of \cite
{kanram08b}), it follows from
Remark 3.2 and Theorem 3.6 of \cite{kanram08b} that if (a) the
sequence (or subsequence) of initial data
$(\fen_*, \fxns, \fmeasns,
\frenegns)$ converges
in distribution to some random element
$(\fe_*, \tilde{X}_*, \tilde{\nu}_*, \tilde{\eta}_*)$ in the
sense of
Assumption 3.1 of~\cite{kanram08b}, (b) $\fe_*$ is continuous and (c)
$\tilde{\eta}_*$ is a continuous
distribution, then the sequence (subsequence)
$\overline{Z}{}^{(N)}$ converges to a process $\overline{Z}
= (\fx, \fmeas, \freneg)$ that is
the unique solution to the fluid equations
with initial data $(\fe_*, \tilde{X}_*, \tilde{\nu}_*, \tilde{\eta}_*)$.
However, by stationarity for each $N \in\N$ and $t > 0$,
$\overline{Z}{}^{(N)}(t)$ has the same distribution as $\overline{Z}{}^{(N)}(0)$.
This implies that $\overline{Z}$ is the constant process that is
identically equal to the initial data $(\tilde{X}_*, \tilde{\nu}_*,
\tilde{\eta}_*)$,
which in turn implies that $(\tilde{X}_*, \tilde{\nu}_*, \tilde
{\eta}_*)$
is an invariant state for the fluid limit.

Thus, to establish the theorem, it only remains to verify properties (a)--(c)
above. Since Assumptions \ref{as-mean}, \ref{ass-en}(1) and \ref
{assinterdis} hold, by the tightness
result established in Theorem \ref{thm-tight},
it follows that the sequence of stationary ``initial conditions'' $\{
\overline{Y}{}^{(N)}_*\}_{N \in\N}$ is tight.
On the other hand, by basic properties of renewal processes and the assumption
that $\flam^{(N)} \ra\lambda$,
the sequence of scaled stationary arrival processes
$\{\overline E{}^{(N)}_*\}_{N \in\N}$ satisfies
$\fen_* \Rightarrow\overline{E}_*$ as $N \ra\infty$,
where $\overline{E}_* (t) = \lambda t$ for
$t \in[0,\infty)$.
Therefore, there exists a convergent subsequence, which by some abuse
of notation
we denote again by $\{\overline{Y}{}^{(N)}_*\}_{N \in\N}$,
that converges weakly to some limit $\overline{Y}_*$ of the form
$\overline{Y}_* = (\lambda{\mathbf1}, \tilde{X}_*, \tilde{\nu}_*,
\tilde{\eta}_*)$.
This immediately shows that properties (a) and (b) above are satisfied.
It only remains to show that $\tilde{\eta}_*$ has a continuous distribution.
Now, by the proof of Theorem 7.1 of \cite{kanram08b} (note that
Assumption 3.2
of \cite{kanram08b} is not used for this part of the proof) it follows that
the inequality (3.39) of \cite{kanram08b} holds and that
$\freneg$ satisfies the dynamical equation (3.42) of \cite{kanram08b}, with
$\freneg_0 = \tilde{\eta}_*$ and~$\fe$ replaced by
$\fe_*$.
By Theorem 4.1 of \cite{kasram07} (equivalently,
Proposition 4.1 of \cite{kanram08b}), it follows that
$\freneg$ satisfies the fluid equation (\ref{eqneta})
with $\freneg_0 = \tilde{\eta}_*$. In particular, also using
the fact that $\fe_*(t) = \lambda t$ and
$\freneg_t$ has the same distribution as $\tilde{\eta}_*$, this
implies that
for every bounded measurable $f$ on $[0,H_r)$ and any $t > 0$,
%
%
\begin{eqnarray}\label{dis-rest}\lan f, \tilde\reneg_* \ran
&\stackrel{(d)}{=}&
\int_{[0,H^r)}f(x+t)\frac{1-G^r(x+t)}{1-G^r(x)}
\tilde\reneg_*(dx)\nonumber\\[-8pt]\\[-8pt]
&&{} + \int_0^tf(t-s)\bigl(1-G^r(t-s)\bigr)\lambda \,ds.\nonumber
\end{eqnarray}
%
Now, sending $t \ra\infty$ on the right-hand side, using
the dominated convergence theorem (which is justified by the
bound $\lan1, \tilde{\eta}_* \ran< \infty$ a.s. established in
Theorem~\ref{thm-tight}), we see that the right-hand side equals $\lambda\lan
f, \eta_*\ran$.
This shows that $\tilde{\eta}_*$ has the same distribution as
$\lambda
\eta_*$, which in particular proves that $\tilde{\eta}_*$ is a continuous
distribution. This
completes the proof of property (c). Thus, we have shown that any
convergence subsequence of the stationary distribution converges to an invariant
state of the fluid limit. When the manifold consists of a single element,
the usual argument by contradiction then shows that
the original sequence of stationary distributions converges
to this point.
\end{pf*}

\section{Concluding remarks}
\label{subs-counteregs}

We can establish ergodicity of the state processes under
an additional condition.
Let
\[
\varrho^r \doteq\sup\{u\in[0,H^r)\dvtx g^r=0 \mbox{ a.e. on } [a,a+u]
\mbox{ for some } a\in[0,\infty)\}
\]
and
\[
\varrho^s \doteq\sup\{u\in[0,H^s)\dvtx g^s=0 \mbox{ a.e. on } [a,a+u]
\mbox{ for some } a\in[0,\infty)\}.
\]

\begin{ass} \label{assPosF}The following three conditions hold:
\begin{longlist}[(1)]
\item[(1)] $H^r=H^s=\infty$;
\item[(2)] $\varrho\doteq\varrho^r\vee\varrho^s<\infty$;
\item[(3)] For every interval $[a,b]\subset[0,\infty)$ with $b-a>0$,
$F^{(N)}(b)-F^{(N)}(a)>0$.
\end{longlist}
\end{ass}
\begin{theorem}
\label{thm-ergodic}
Suppose Assumptions \ref{as-mean}--\ref{assinterdis} and
\ref{assPosF} hold. Then the Markov process $\{Y_t,
{\cal F}_t, t\geq0\}$ is ergodic in the sense that it has
a unique stationary distribution and the
distribution of $Y(t)$ converges in total variation, as $t\ra\infty$,
to this
unique stationary distribution.
\end{theorem}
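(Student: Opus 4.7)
The plan is to invoke the Meyn--Tweedie ergodic theorem for Harris recurrent Markov processes. Since Theorem~\ref{thm-SD} already provides existence of a stationary distribution, it suffices to establish that $\{Y_t\}$ is aperiodic and that every compact set is petite; total variation convergence then follows from the standard ergodic theorem for positive Harris recurrent, aperiodic Markov processes.

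The crucial ingredient is an \emph{emptying lemma}: for every compact $C \subset \mathcal{Y}$ there exist $T_C, p_C > 0$ such that $\mathbb{P}_y(Y(T_C) \in \mathcal{O}) \geq p_C$ for every $y \in C$, where $\mathcal{O} \doteq \{(\alpha,0,\zerof,\zerof) : \alpha \geq 0\}$ is the set of empty-system states. I would prove it by intersecting three positive-probability events: (a) no new customer arrives in $[0, T_C]$; (b) every initial customer waiting in queue abandons before any initial customer in service completes service; (c) every initial in-service customer finishes service by time $T_C$. Event (a) has positive probability because Assumption~\ref{ass:PosF}(3), through strict monotonicity of $F^{(N)}$, forces $F^{(N)}(\alpha + T_C) < 1$. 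Events (b) and (c) have positive probability because the residual service (resp.\ patience) time of a customer of current age $a$ has conditional law proportional to $g^s(a+\cdot)$ (resp.\ $g^r(a+\cdot)$), and the condition $\varrho = \varrho^s \vee \varrho^r < \infty$ together with $H^s = H^r = \infty$ ensures that the interval $[a, a+T_C]$ carries positive mass once $T_C > \varrho$. Compactness of $C$ bounds the number of point masses in $\nu$ by $N$ and provides a uniform bound on the total mass and support of $\eta$, so these pointwise estimates become uniform constants $T_C, p_C$.

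From the emptying lemma I would then deduce the petite-set property. Starting from $\mathcal{O}$, the semigroup at time $T_C + T'$ has, with positive probability, produced a state determined by a finite number of arrival epochs and subsequent service and patience samples, each carrying a density. A change-of-variables then shows that $P_{T_C + T'}(y, \cdot)$ dominates $c\,\mu(\cdot \cap B)$ for a nontrivial reference measure $\mu$ and a bounded reference set $B \subset \mathcal{Y}$, uniformly over $y \in C$. This is the petite-set condition, and since the same minorization holds for all $T'$ in an open interval the process is also aperiodic. Combined with the tightness of $\{L_t\}_{t \geq 0}$ (Lemma~\ref{lem:Ltight}), which supplies the required recurrence of compact sets, the Meyn--Tweedie stability criterion yields positive Harris recurrence, and hence the distribution of $Y(t)$ converges in total variation to the unique stationary distribution. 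The principal obstacle is the emptying lemma: the intertwining between queue and service must be decoupled, and the ordering requirement in event (b) achieves this cleanly, using continuity of $G^s$ and $G^r$ to avoid ties almost surely.
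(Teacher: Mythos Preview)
Your approach is in the same spirit as the paper's---both invoke the Meyn--Tweedie machinery and rely on driving the system to an empty state---but the execution differs in a way worth noting. The paper does not verify the petite-set condition directly; instead it shows that $Y$ is a $T$-process (a continuous component for the resolvent kernel) and is $\psi$-irreducible, and then invokes the criterion that a $\psi$-irreducible $T$-process which is bounded in probability on average is positive Harris recurrent. The paper's emptying scenario is also different from yours: rather than forbidding all arrivals on $[0,T_C]$, it lets \emph{exactly one} customer arrive (after time $2\varrho+1$) and then depart before $t$, with no second arrival before $t$. The point of this seemingly more complicated event is that the backward recurrence time at time $t$ then equals $t$ minus the random arrival epoch, which has a density coming from $dF$; this immediately produces the lower-semicontinuous minorizing kernel $T(y,\cdot)$ without any further change-of-variables argument. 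Your no-arrival event instead lands you at the deterministic point $(\alpha+T_C,0,\zerof,\zerof)$, so you still owe a separate minorization step from the one-parameter family $\mathcal{O}$, which is where your sketch becomes vague.

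Two smaller points. First, reaching $(\alpha,0,\zerof,\zerof)$ requires $\eta=\zerof$, and the potential queue measure $\eta$ retains a mass for every customer---including those already in service---until that customer's patience time elapses. So your event (c) must also demand that the patience times of the initially in-service customers expire by $T_C$, not only that their service completes; the paper's event (a) explicitly asks for both. Second, your appeal to Assumption~\ref{ass:PosF}(3) for event (a) is correct (it forces $F^{(N)}(x)<1$ for all $x$), but the uniformity over compact $C$ comes from the continuity of $\alpha\mapsto(1-F^{(N)}(\alpha+T_C))/(1-F^{(N)}(\alpha))$, which you should make explicit. None of these are fatal, but the paper's one-arrival trick and $T$-process route give a cleaner path to the same destination.
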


Theorem \ref{thm-ergodic}, whose proof is deferred to the \hyperref[secPHR]{Appendix}, validates
the rightward arrow at the top of the
``interchange of limits'' diagram presented in
Figure \ref{diag-comm}. On the other hand, the fluid limit theorem
(Theorem 3.6 of~\cite{kanram08b}) justifies the downward\vadjust{\goodbreak}
arrow on the left-hand side of Figure \ref{diag-comm}.
The focus of this work has been on understanding the
convergence represented by the downward arrow on the right-hand side
of Figure \ref{diag-comm}.
%
%
\begin{figure}
$
\begin{CD}
(\fxn(t),\fmeasn_t,\frenegn_t) @>>> (\fxns,\fmeasns,\frenegns) \\
@VVV @VVV \\
(\fx(t),\fmeas_t,\freneg_t) @>?>> (\fx_*,\fmeas_*,\freneg_*)
\end{CD}
$
\caption{Interchange of limits diagram.}
\label{diag-comm}
\end{figure}
When there is a~unique invariant state, this
convergence is established in Theorem \ref{thm-convstat}.
Although this question is not directly relevant to the characterization
of the stationary distributions, it is natural in this
setting to ask whether the diagram in Figure \ref{diag-comm} commutes,
namely, whether the fluid limit from any
initial condition converges as $ t \ra\infty$ to the unique
invariant state.
In Section~\ref{subs-longtime} we briefly discuss why the study of
the long-time behavior of the fluid limit
is a nontrivial task.
Furthermore,
in Section \ref{subs-interchange} we provide a very simple
counterexample that shows that the diagram in
Figure \ref{diag-comm} need not commute and thus,
the limits $N \ra\infty$ and $t \ra\infty$ cannot always be
interchanged.


\subsection{Long-time behavior of the fluid limit}
\label{subs-longtime}

The long-time behavior of the fluid limit is nontrivial even
in the absence of abandonment. For example, in the absence of abandonment,
it was proved in Theorem 3.9 of \cite{kasram07} that $\fmeas_t \ra
\lambda\meas_*$ as $t \ra\infty$ when $\lambda\in[0,1]$, the
service time
distribution $G^s$ has a second moment and its hazard rate function
$h^s$ is either bounded or lower-semicontinuous on $(m_0,H^s)$ for some
$m_0<H^s$. The question of whether
the second moment condition on the distribution is necessary for this
convergence is still unresolved.
Even under the second moment assumption,
the long-time behavior of the component $\fx$ of the fluid limit
is not easy to describe except in the cases when (i)~the system is
subcritical ($\lambda< 1$) or (ii) when the system is critical or
supercritical ($\lambda\geq1$)
and
the service distribution is exponential.
In case (i), it follows from Theorem 3.9
of \cite{kasram07} that $\fx(t) \ra\lambda\lan{\mathbf1},
\meas_*\ran$ as $t \ra\infty$, whereas in case (ii),
if the initial condition satisfies $\fx(0) \geq1$ and $\fmeas_0 \in
{\cal
M}_F[0,\infty)$, then it is easy to see that the fluid limit
is given explicitly by $\fx(t) = \fx(0) + (\lambda- 1) t$ and
$\fmeas_t(dx) =
\indd_{[0,t]}e^{-x}\,dx+\indd_{(t,\infty)}(x)e^{-t}\fmeas_0(d(x-t))$.
Therefore,\vspace*{1pt}
at criticality ($\lambda= 1$), if
$\fx(0) = 1$ then $\fx(t)=\fx(0)$ for every $t>0$. In particular,
$\fx(t) \ra1$ as $t \ra\infty$.
However, as the following example demonstrates,
the critical fluid limit need not converge to $1$ [even if
critically loaded and with initial condition $\overline{X}(0)=1$] when
the service distribution is not exponential.
%
%
\begin{example}
Let the fluid arrival rate be $\overline{E}(t) = t$, $t > 0$,
and let the service time distribution $G^s$ be the Erlang
distribution with density
\[
g^s(x)=4xe^{-2x},\qquad x\geq0.\vadjust{\goodbreak}
\]
A simple
calculation shows that
$\int_0^\infty(1-G^s(x)) \,dx = 1$. Let $(\fx, \fmeas)$ be the
solution to
the fluid equations
without abandonment (see Definition \ref{defwa})
associated with the initial condition $({\mathbf1}, 1, \delta_0)$. We
show below
that in this~case,
$\lim_{t \ra\infty} \fx(t) = 5/4$, which is bigger than $1 = \fx(0)$.
In fact, since
$\fmeas_0=\delta_0$, a~straightforward calculation shows that
\[
\lan h^s, \fmeas_0 \ran= \int_0^\infty
\frac{g^s(x)}{1-G^s(x)}\fmeas_0(dx)=\frac{g^s(0)}{1-G^s(0)}=g^s(0)=0.
\]
Define
\[
\kappa\doteq\inf\{t\geq0\dvtx\lan h^s, \fmeas_t \ran\geq1\}.
\]
The hazard rate function $h^s$ is bounded and continuous and
$\lan h^s, \fmeas_0 \ran< \lambda= 1$.
Therefore, substituting $h^s$ in (\ref{eqnnu}), it is clear that
$\kappa> 0$
and $\lan h^s, \fmeas_t \ran< \lambda= 1$ for
$t \in[0,\kappa)$. In turn, by the nonidling condition, this implies
$\lan{\mathbf1}, \fmeas_t \ran= 1$ and
$d\fk/dt = \lan h^s, \fmeas_t\ran$ and, by (\ref{eqnnu}), for $t
\in
[0,\kappa)$,
\[
\lan h^s, \fmeas_t \ran= g^s(t) + \int_0^t g^s(t-s)\,\frac{d\fk
}{dt}(s) \,ds
= g^s(t) + \int_0^t g^s(t-s)\lan h^s, \fmeas_s \ran \,ds.
\]
Applying the key renewal theorem to the above equation, it
follows that
\[
\lan h^s, \fmeas_t \ran= u^s(t) =
1-e^{-4t}.
\]
Since $u^s(t)<1$ for all $t\geq0$, we must have that $\kappa=
\infty$, $\lan{\mathbf1}, \fmeas_t \ran= 1$ for all $t\geq0$,
and
\[
\lim_{t\rightarrow\infty}\fq(t)=\int_0^\infty\bigl(1-u^s(t)\bigr)\,dt =
\int_0^\infty e^{-4t}\,dt = 1/4,
\]
which yields the convergence of $\fx(t)$ to
$5/4$ as $t\ra\infty$.

To emphasize that this phenomenon is not the consequence of the fact
that the initial
condition was chosen to be singular with respect to Lebesgue measure,
we show
that we can modify the above example by choosing
$\fmeas_0$ to be absolutely continuous with respect to the Lebesgue
measure. For example, for some $\alpha\in(0,\infty)$, define
\[
q(x) \doteq\cases{
\displaystyle \frac{1+2x}{\alpha+\alpha^2}, &\quad
if $x\in[0,\alpha]$, \vspace*{2pt}\cr
0, &\quad otherwise,}
\]
and let $\fmeas_0(dx)=q(x)\,dx$. Then $\lan{\mathbf1}, \fmeas_0 \ran
= \int_0^\alpha
q(x)\,dx = 1$,
$\lan h^s, \fmeas_t \ran=1-((1-\alpha)/(\alpha+1))e^{-4t}$ for each
$t\geq
0$.
Hence, when $\alpha<1$ we have $\lan h^s, \fmeas_t \ran< 1$ and
$\lan{\mathbf1},
\fmeas_t \ran= 1$ for all $t\geq0$. This implies that, when
$\alpha<1$,
\[
\lim_{t\rightarrow\infty}\fq(t)=\int_0^\infty
\frac{1-\alpha}{\alpha+1} e^{-4t}\,dt = \frac{1-\alpha}{4(\alpha+1)}>0,
\]
showing
that $\lim_{t\ra\infty}\fx(t)>1$.
\end{example}

\subsection{A counterexample (invalidity of the interchange of limits)}
\label{subs-interchange}

In this section we show that even for an $M/M/N$ queue (both with and without
abandonments), the ``interchange of limits'' need not hold, that is,
the diagram presented in Figure~\ref{diag-comm} may not commute.

Consider the sequence of state processes $(\xn, \measn)$, $N \in\N
$, of
$N$-server queues without abandonment,
where the service time distribution $G^s$ is exponential with rate $1$. For
the $N$th queue, let the arrival process $\en$ be a~Poisson process with
parameter $\lambda^{(N)}=N-1$ and suppose that there exists $\fmeas_0
\in
{\cal M}_F[0,\infty)$ with $\lan{\mathbf1}, \fmeas_0 \ran\leq1$
such that a.s., as $N \ra\infty$,
%
%
\begin{equation}\label{eg-convint}
\bigl(\fxn(0), \fmeasn_0\bigr) \ra(2, \fmeas_0).
\end{equation}
%
Given the exponentiality of the service time distribution, it immediately
follows that Assumption 2 of \cite{kasram07} is satisfied. Moreover,
because (\ref{eg-convint})
holds and $\overline\lambda^{(N)} = (N-1)/N \ra1$ as $N \ra\infty$,
it follows that Assumption 1 of \cite{kasram07} also holds with
$\lambda=
1$.
On the other hand, since $G^r(x)=0$ for all $x\in[0,\infty)$,
Assumption 2
fails to hold
because in this case $B_1=[1,\infty)$ and so the invariant manifold has
uncountably
many points.

Now, because Assumptions 1 and 2 of \cite{kasram07}
are satisfied, we can apply the fluid limit result
in Theorem 3.7 of \cite{kasram07} to conclude that, almost surely,
as $N \ra\infty$,
$(\fxn, \fmeasn)$
converges weakly to the unique solution $(\fx, \fmeas)$ of
the fluid equations associated with initial data $({\mathbf1}, 2,
\fmeas_0)$, and using the
exponentiality of the service time distribution, it is easily verified
that the fluid limit
is given explicitly by $\fx(t) = \fx(0)=2$ and
$\fmeas_t(dx) = \indd_{[0,t]}e^{-x}\,dx+\indd_{(t,\infty
)}(x)e^{-t}\fmeas_0(d(x-t))$.

For each $N\in\N$, because the arrival rate, which equals $N-1$, is
less than the total service rate $N$, by (3.2.4) and (3.2.5) of \cite
{BDPS} it follows that $\xn$ is ergodic and has the following
stationary distribution:
\[
\PP\bigl(\xns=k\bigr) = \cases{
\displaystyle \frac{(N-1)^k}{k!}p_0, &\quad if $k=0,1,\ldots,N-1$,\vspace*{2pt}\cr
\displaystyle \frac{(N-1)^k}{N!N^{k-N}}p_0, &\quad if $k=N,N+1,\ldots,$}
\]
where
\[
p_0\doteq\Biggl\{\sum_{i=0}^{N-1}\frac{(N-1)^i}{i!}+\frac
{(N-1)^N}{(N-1)!}\Biggr\}^{-1}.
\]
It follows from Stirling's formula that
\begin{eqnarray*}
\lim_{N\ra\infty} \frac{\sum_{i=0}^{N-1}{(N-1)^i}/{i!}}{
{(N-1)^N}/{(N-1)!}} &=& \lim_{N\ra\infty} \frac{\sum_{i=0}^{N-1}
{(N-1)^i}/{i!}}{({1}/{\sqrt{2\pi}})\sqrt{N-1}e^{N-1}}\\
&\leq&\lim
_{N\ra\infty} \frac{\sum_{i=0}^{\infty}{(N-1)^i}/{i!}}{(
{1}/{\sqrt{2\pi}})\sqrt{N-1}e^{N-1}}= 0.
\end{eqnarray*}
For each $\varepsilon>0$, elementary calculations show that
\begin{eqnarray*}
\PP\bigl(\xns\geq N+\varepsilon N\bigr)&=&\sum_{k=N+\varepsilon N}^\infty
\frac
{(N-1)^k}{N!N^{k-N}}p_0 \\ &=& \frac{N^N}{N!}p_0 \sum
_{k=N+\varepsilon N}^\infty\biggl(\frac{N-1}{N}\biggr)^k \\ &=&
\frac{N^N}{N!}p_0 \biggl(\frac{N-1}{N}\biggr)^{N+\varepsilon N}N \\
&=& \frac{(N-1)^N}{(N-1)!}p_0\biggl(\frac{N-1}{N}\biggr)^{
\varepsilon N}
\end{eqnarray*}
and
\[
\PP\bigl(\xns\leq N-\varepsilon N\bigr)=\sum_{k=0}^{N-\varepsilon N} \frac
{(N-1)^k}{k!}p_0.
\]
Combining the above three displays, we then have for each $\varepsilon>0$
%
%
\begin{equation} \label{mmnb}
\lim_{N\ra\infty}\PP\bigl(\fxns\geq1+\varepsilon\bigr) = \lim_{N\ra
\infty}\PP\bigl(\xn_*\leq1-\varepsilon\bigr)=0.
\end{equation}
Using the distribution of $\xns$ it can also be shown that
\[
\sup_{N\in\N}\E\bigl[\fxn_*\bigr]=\sup_{N\in\N}\frac{\E
[\xn_*]}{N}\leq3.
\]
An application of Markov's inequality then shows
that the sequence\break $\{\fxn_*\}_{N \in\N}$ is tight. Let $\overline x_*$ denote
a subsequential
weak limit of $\{\fxn_*\}_{N \in\N}$. Then\vspace*{1pt}
(\ref{mmnb}) clearly shows that almost surely, $\overline x_*=1$. Thus, as
$N \ra\infty$ $\fxn_*$ converges weakly to $1$ (see Theorem 1 of
\cite{HalWhi} for a more refined
calculation that also identifies the limit of the sequence of
stationary distributions centered
around $N$ and divided by $\sqrt{N}$).
We have shown that the sequence of stationary distributions
does not converge (even along a subsequence) to the value $2$, thus
demonstrating that the interchange of limits does not hold
even in this simple setting.

In addition, this example also demonstrates
that even in the presence of multiple invariant states,
the sequence of scaled stationary distributions
$(\fxns, \frenegns, \fmeasns)$, $N \in\N$,
could still converge to a limit. In the above example,
the explicit formula of the stationary
distribution of the $M/M/N$ queue was used to compute this limit,
which defeats the whole purpose of the approximation.
It is unclear whether, in the
presence of multiple invariant states, there is a general
methodology that does not rely on {a priori} knowledge of the
stationary distributions of the $N$-server queues, but
that would nevertheless allow one to identify when a limit exists
and, in that case,
identify which invariant state corresponds to the limit.

A minor modification of the above example shows that the interchange of
limits can also fail to hold in the presence of abandonment.
For the same sequence of queues described above, suppose that customers
abandon the queue according to a nontrivial patience time distribution $G^r$
satisfying Assumption \ref{ass-h} and having support in $(3, \infty
)$. For
each $N\in\N$,
consider the marginal state process $(\xn,\measn,\renegn)$. Suppose
that there exists $(2, \fmeas_0, \freneg_0)
\in{\cal S}_0$ such that almost surely, as $N \ra\infty$,
%
%
\begin{equation}
\label{eg-convint2} \bigl(\fxn(0), \fmeasn_0, \frenegn_0\bigr) \ra(2, \fmeas_0,
\freneg_0).
\end{equation}
Given the assumption imposed on the patience time
distribution, Assumption 2 fails to hold
because in this case $B_1=[1,3]$. By the previous argument,
Assumptions \ref{as-mean}, \ref{ass-en} and \ref{ass-h} are
satisfied. Therefore, by the fluid limit result stated as Theorem 3.6
of \cite{kanram08b}
(see also the proof of Theorem \ref{thm-convstat} of the current
paper) it
follows that almost surely, as $N \ra\infty$, $(\fxn, \fmeasn,
\frenegn)$
converges weakly to the unique solution $(\fx, \fmeas, \freneg)$ of
the fluid equations associated with $({\mathbf1}, 2, \fmeas_0,
\freneg_0)$. By the exponentiality of the service time distribution,
we have $\fx(t) = \fx(0)=2$ and $\fr(t)=0$ for each $t\geq0$.
On the other hand, let $\fyns=(\overline{\alpha}{}^{(N)}_{E,*},\fxns
,\fmeasns,\frenegns)$
be the stationary distribution associated with the fluid-scaled
state process, which exists by Theorem \ref{thm-SD}. By a simple coupling
argument, it can be shown that $\xn$ is stochastically dominated by the
corresponding
state $\tilde
X^{(N)}$ of an $M/M/N$ queue without abandonment that has the same arrival
process $\en$ and the same initial condition [i.e., $\PP(\tilde
X^{(N)}\geq
c)\geq\PP(\xn\geq c)$ for every $c>0$]. Together with the previous
discussion of the
case without abandonment, this can be used to show that $\{\fxn_*\}_{N
\in
\N}$ is tight and, for any $\ve> 0$,
$\limsup_{N\ra\infty}\PP(\fxns\geq(1+\ve)) = 0$, from which one can
conclude that $\fxns\rightarrow1$.
Thus, in this case too,
\[
\lim_{N \ra\infty} \lim_{t \ra\infty}
\fxn(t) = \lim_{N \ra\infty} \fxn_* = 1 \neq
2 = \lim_{t \ra\infty} \fx(t) = \lim_{t \ra\infty} \lim_{N \ra
\infty}
\fxn(t),
\]
where the limits are all in the sense of weak convergence.

\begin{appendix}\label{secPHR}

\section*{\texorpdfstring{Appendix: Proof of Theorem \lowercase{\protect\ref{thm-ergodic}}}%
{Appendix: Proof of Theorem 7.1}}

By Theorem 6.1 of \cite{MT}, to show that the Feller
process $\{Y_t,{\cal F}_t, t\geq0\}$
is ergodic, it suffices to show that the skeleton chain
$\{Y_n\}_{n \in\N}$ is $\psi$-irreducible and that
$\{Y_t,{\cal F}_t, t\geq0\}$
is positive Harris recurrent. This is done in Lemma~\ref{lemYirreducible}
and Theorem \ref{thmPHR}
below.
Let $\varrho$ be the quantity defined in condition (2) of Assumption
\ref{assPosF}, and define
\[
{\cal Z} \doteq \{(\alpha,0,\zerof,\zerof)\in {\cal Y}\dvtx
\alpha \in [\varrho + 1,\infty)\}.
\]
For each Borel subset $A$ of ${\cal Z}$,
let $\Gamma_A \subset [1+\varrho, \infty)$ be the Borel subset
obtained by projecting
${\cal Z}$ to its first coordinate:
%
%
\setcounter{equation}{0}
\begin{equation}\label{setA}
\Gamma_A \doteq  \{ \alpha \in \R\dvtx
(\alpha, 0, \zerof,\zerof) \in A\}.
\end{equation}
\setcounter{theorem}{0}
\begin{lemma} \label{lemPHR1} There exists a strictly positive
continuous function $C$ on~${\cal Y}$ such that for every $y=(\alpha,
x, \sum_{i=1}^k \delta_{u_i}, \sum_{j=1}^l \delta_{z_j})\in{\cal
Y}$, every Borel subset $A\subset{\cal Z}$ and every $t>2\varrho+1$,
\begin{eqnarray}
&&\PP_y\bigl(Y(t)\in A\bigr) \nonumber\\[-8pt]\\[-8pt]
&&\qquad\geq C(y)\int_{\alpha+2\varrho+1}^{\alpha+t}
\indd_{\Gamma_A}(\alpha+t-s)\bigl(1-F(\alpha+t-s)\bigr)\,
dF(s).\nonumber
\end{eqnarray}
\end{lemma}
\begin{pf}
At time $t$, if the state $Y(t)$ is in the set $A\subset{\cal Z}$,
this means that, by time~$t$, all customers in service at time $0$ with
residual service times $\{u_i, 1\leq i\leq k\}$, all customers in
queue at
time $0$ with residual patience times $\{z_j, 1\leq j\leq l\}$ and
those new
customers that arrived in the interval $[0,t)$ have completed service
(if they
entered service before time $t$) and have run out of their patience
(irrespective of whether or not they entered service).
Now, we consider a subset of $\{\omega\dvtx Y(t,\omega)\in A\}$, in which
(a) by time $2\varrho+1 < t$, all the initial customers with residual
patience times $\{z_j, 1\leq j\leq l\}$ and residual service times
$\{u_i, 1\leq i\leq k\}$ have finished service (if they entered
service) and run out of their patience (irrespective of whether or nor
they entered service), (b) the first new customer arrived after
$2\varrho+1$, finished service before $t$ and ran out of his/her
patience time before $t$, (c) the difference between $t$ and the
arrival time of that customer lies in~$\Gamma_A$, and (d) the second
new customer arrived after time $t$.  Let $\calQ_a$, $\calQ_{ad}$
and~$\calQ_{bd}$, respectively, be the events that property (a) holds,
properties (a)--(d) hold and properties~(b)--(d) hold.  Then, for $y\in
{\cal Y}$,
\[
\PP_y\bigl(Y(t)\in A\bigr) \geq \PP_y(\calQ_{ad}) = \PP_y(\calQ_{a})
\PP_y(\calQ_{bd}|\calQ_{a}),
\]
and, due to the independence assumptions on the service, patience and
interarrival distributions,  $\PP_y(\calQ_{bd}|\calQ_{a})$ is greater
than or equal to
\begin{eqnarray*}
&&
\int_{\alpha+2\varrho+1}^{\alpha+t}
G^r(\alpha+t-s)G^s(\alpha+t-s)\\
&&\qquad\quad\hspace*{3pt}{}\times\indd_{\Gamma_A}(\alpha+t-s)\bigl(1-F(\alpha+t-s)\bigr)\,
\frac{dF(s)}{1-F(\alpha)} \\
&&\qquad\geq
\frac{G^r(\varrho+1)G^s(\varrho+1)}{1-F(\alpha)}\int_{\alpha+2\varrho+1}^{\alpha+t}
\indd_{\Gamma_A}(\alpha+t-s)\bigl(1-F(\alpha+t-s)\bigr) \,dF(s),
\end{eqnarray*}
where the last inequality holds because $\alpha+t-s\geq \varrho+1$ when
$\alpha+t-s\in \Gamma_A$. Let $C(y) \doteq (\PP_y( \calQ_a)
G^r(\varrho+1)G^s(\varrho+1))/(1-F(\alpha))$.
Since, due to Assumption~\ref{assPosF}(2),
$G^r(A)>0$ and $G^s(A)>0$ for any interval
$A$ with length bigger than~$\varrho$,
$\PP_y(\calQ_a)$, as a function of $y\in \cal
Y$, is strictly positive and continuous.
Thus $C$ is a strictly positive and continuous function
on $\cal Y$, and the lemma is proved.
\end{pf}
\begin{defn}
Any Markov process $\{X_t\}$ with topological state spa\-ce~${\cal X}$ is said
to be $\psi$-irreducible if and only if there exists a $\sigma
$-finite measure
$\psi$ on~${\cal B}({\cal X})$, the Borel $\sigma$-algebra on ${\cal
X}$ such
that for every $x\in{\cal X}$ and $B \in{\cal B}({\cal X})$,
\[
\int_0^\infty\PP_x\bigl(X(t)\in B\bigr)\,dt >0 \qquad\mbox{if } \psi(B)>0.
\]
\end{defn}

Let $\psi=m\times\delta_0\times\delta_0\times\delta_0$, where
$m(A) =
\overline{m}(A \cap[\varrho+1, \infty))$, where $\overline{m}$ is Lebesgue
measure. Clearly, $\psi$ is a $\sigma$-finite measure on $({\cal
Y},{\cal B}({\cal Y}))$.
\begin{lemma}\label{lemYirreducible}
The Markov process $\{Y_t, {\cal F}_t\}$ is $\psi$-irreducible and
the discre\-te-time Markov chain $\{Y(n)\}_{n\in\N}$ is $\psi$-irreducible.
\end{lemma}
\begin{pf}
Let $B\in{\cal B}({\cal Y})$ be such that $\psi(B)>0$. Then
$\psi(B\cap{\cal Z})>0$ by the
definition of $\psi$.
Let $\Gamma_{B\cap {\cal Z}}$ be the set defined in (\ref{setA}) with
$A = B \cap {\cal Z}$
and suppose $m(\Gamma_{B\cap{\cal Z}})>0$. Fix $y\in\cal Y$. It follows
from Lemma \ref{lemPHR1}
that there exists a strictly positive function $C$ on $\cal Y$
such that
\begin{eqnarray*}
&&\int_0^\infty \PP_y\bigl(Y(t)\in B\cap {\cal Z}\bigr)\,dt \\[-2pt]
&&\qquad \geq
\int_{2\varrho+1}^\infty \PP_y\bigl(Y(t)\in B\cap {\cal Z}\bigr)\,dt \\[-2pt]
&&\qquad \geq
\int_{2\varrho+1}^\infty C(y)\biggl( \int_{\alpha+2\varrho+1}^{\alpha+t}
\indd_{\Gamma_{B\cap {\cal Z}}}(\alpha+t-s)\bigl(1-F(\alpha+t-s)\bigr)
\, dF(s) \biggr) \, dt \\[-2pt]
&&\qquad = C(y) \bigl(1-F(\alpha+2\varrho+1)\bigr) \int_{\Gamma_{B\cap {\cal
Z}}}\bigl(1-F(t)\bigr)\, dt\\[-2pt]
&&\qquad > 0,
\end{eqnarray*}
where the equality follows from Fubini's theorem and the last inequality
holds because $C(y)>0$, $m(\Gamma_{B\cap{\cal Z}})>0$ and
$1-F(x)>0$ for every $x\in[0,\infty)$ by Assumption
\ref{assPosF}(3). This establishes the first assertion. On the other
hand, for $n> 2\varrho+1$,
\[
\PP_y\bigl(Y(n)\in B\bigr) \geq C(y)\int_{\alpha+2\varrho+1}^{\alpha+n}
\indd_{\Gamma_{B\cap {\cal Z}}}(\alpha+n-s)\bigl(1-F(\alpha+n-s)\bigr)\, dF(s).
\]
By Assumption \ref{assPosF}(3) and the fact that $m(\Gamma_{B\cap
{\cal Z}})>0$, it follows that\break $\PP_y(Y(n)\in B)>0$ for all $n$
sufficiently large. Hence, $\{Y(n)\}_{n\in\N}$ is
$\psi$-irreducib\-le.\vadjust{\goodbreak}~%
\end{pf}

%

For each $y\in{\cal Y}$, $B\in{\cal B}({\cal Y})$ and each probability
measure $\Pi$ on $[0,\infty)$, let
\[
{\cal K}_\Pi(y,B)=\int_0^\infty\PP_y\bigl(Y(t)\in B\bigr) \Pi(dt).
\]

%
\begin{lemma} \label{lem-T} There exists a probability measure $\Pi$
on $[0,\infty)$ and a function $T\dvtx{\cal Y}\times{\cal B}({\cal Y})
\ra\R_+$ such that:
\begin{longlist}[(1)]
\item[(1)] ${\cal K}_{\Pi}(y,B)\geq T(y,B)$ for all $y\in{\cal Y}$ and
every Borel measurable set $B\in{\cal B}({\cal Y})$;
\item[(2)] $T(y,{\cal Y})>0$ for all $y\in{\cal Y}$;
\item[(3)] $T(\cdot,B)$ is lower-semicontinuous for every $B\in{\cal
B}({\cal Y})$.
\end{longlist}
\end{lemma}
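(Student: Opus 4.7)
The plan is to construct $T$ by integrating the pointwise lower bound of Lemma \ref{lem:PHR1} against a smooth, compactly supported probability measure $\Pi$, and then exploiting the resulting convolution structure in the first coordinate $\alpha$. Let $f$ denote the density of the interarrival distribution $F$, which exists by Assumption \ref{ass:interdis}. Fix $T_0 > 3\varrho + 2$ and choose a continuous density $p:[0,\infty) \to [0,\infty)$ with $\supp(p) \subset [3\varrho + 2, T_0]$ and $\int_0^\infty p(t)\, dt = 1$; set $\Pi(dt) = p(t)\, dt$. Writing $y=(\alpha,\cdot,\cdot,\cdot)\in {\cal Y}$ and denoting by $\Gamma_{B\cap {\cal Z}}$ the Borel subset of $[\varrho+1,\infty)$ associated with $B\cap{\cal Z}$ via (\ref{setA}), define
\[
T(y,B) \doteq C(y) \int_{3\varrho+2}^{T_0} p(t) \int_{\alpha + 2\varrho + 1}^{\alpha + t} \ind_{\Gamma_{B \cap {\cal Z}}}(\alpha + t - s)(1 - F(\alpha + t - s)) f(s)\, ds\, dt,
\]
where $C(\cdot)>0$ is the strictly positive continuous function from Lemma \ref{lem:PHR1}.

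Property (1) is immediate: since $\supp(\Pi) \subset (2\varrho + 1, \infty)$ and $B \cap {\cal Z} \subset {\cal Z}$, integrating the bound of Lemma \ref{lem:PHR1} (applied with $A = B \cap {\cal Z}$) against $\Pi$, together with $\P_y(Y(t) \in B) \geq \P_y(Y(t) \in B \cap {\cal Z})$, gives ${\cal K}_\Pi(y,B) \geq T(y,B)$. For property (2), take $B = {\cal Y}$, so $\Gamma_{\cal Z} = [\varrho+1,\infty)$; for each $t \in (3\varrho+2,T_0]$ the interval $s \in (\alpha + 2\varrho + 1, \alpha + t - \varrho - 1)$ is nonempty and gives $\alpha + t - s \in (\varrho+1, t-2\varrho-1)$. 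By Assumption \ref{ass:PosF}(3) the density $f$ is strictly positive almost everywhere on $[0,\infty)$ and $1-F(u) > 0$ for every $u \in [0,\infty)$; since $p$ is strictly positive on a subinterval of $[3\varrho+2,T_0]$ and $C(y)>0$, this yields $T(y,{\cal Y}) > 0$.

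For property (3), fix $B$ and write $\Gamma = \Gamma_{B \cap {\cal Z}}$. The substitution $u = \alpha + t - s$ in the inner integral, followed by Fubini (justified since the integrand is bounded and the double integral is over a bounded region by compactness of $\supp(p)$), and then $\sigma = \alpha + t - u$ in the resulting integral over $t$, yields
\[
T(y,B) = C(y) \int_0^{T_0 - 2\varrho - 1} \ind_\Gamma(u)(1 - F(u))\, \Psi(\alpha, u)\, du, \qquad \Psi(\alpha, u) \doteq \int_{\alpha + 2\varrho + 1}^\infty p(\sigma - \alpha + u)\, f(\sigma)\, d\sigma,
\]
where we used that $p$ vanishes outside $[3\varrho+2,T_0]$ to replace the lower limit in the $t$-integral by $u + 2\varrho + 1$. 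Uniform continuity of $p$ together with absolute continuity of $F$ yield the estimate
\[
|\Psi(\alpha', u) - \Psi(\alpha, u)| \leq \omega_p(|\alpha' - \alpha|) + \|p\|_\infty\, |F(\alpha' + 2\varrho + 1) - F(\alpha + 2\varrho + 1)|,
\]
where $\omega_p$ is the modulus of continuity of $p$; this tends to zero as $\alpha' \to \alpha$, so $\alpha \mapsto \Psi(\alpha,u)$ is continuous, with $|\Psi(\alpha,u)| \leq \|p\|_\infty$ uniformly. Dominated convergence applied to the outer $u$-integral (with dominating function $\|p\|_\infty \ind_{[0,T_0-2\varrho-1]}(u)$) gives continuity of $\alpha \mapsto \int_0^{T_0 - 2\varrho - 1} \ind_\Gamma(u)(1 - F(u))\,\Psi(\alpha, u)\, du$; combined with the continuity of $C(\cdot)$ on ${\cal Y}$ (asserted in Lemma \ref{lem:PHR1}), this shows that $T(\cdot, B)$ is continuous on ${\cal Y}$, hence lower-semicontinuous.

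The principal technical subtlety is that $\ind_{\Gamma_{B \cap {\cal Z}}}(\alpha + t - s)$ depends on $\alpha$ in a merely Borel-measurable way, so a crude choice of $\Pi$ (e.g.\ a Dirac mass) would produce a kernel lacking the required lower-semicontinuity. Smoothing by convolution with the continuous density $p$ converts the $\alpha$-dependence into a translation of $p$ against the fixed integrand $f$, for which continuity follows directly from uniform continuity of $p$.
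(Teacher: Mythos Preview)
Your proof is correct and takes a genuinely different route from the paper's. The paper chooses $\Pi$ to be the shifted exponential distribution with density $e^{-(t-2\varrho-1)}$ on $[2\varrho+1,\infty)$; the multiplicative identity $e^{-(t-2\varrho-1)} = e^{\alpha+2\varrho+1}\,e^{-s}\,e^{-(\alpha+t-s)}$ then makes the double integral coming from Lemma \ref{lem:PHR1} factor exactly as
\[
T(y,B)=C(y)\,e^{\alpha+2\varrho+1}\int_{\alpha+2\varrho+1}^{\infty} e^{-s}\,dF(s)\int_{0}^{\infty}(1-F(u))\,\ind_{\Gamma_{B\cap{\cal Z}}}(u)\,e^{-u}\,du,
\]
so continuity of $T(\cdot,B)$ is immediate and the entire $B$-dependence sits in a single factor not involving $\alpha$ (or $y$) at all. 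Your compactly supported continuous density also delivers continuity of $T(\cdot,B)$, but at the cost of the explicit convolution/modulus-of-continuity estimate; the exponential trick buys a cleaner product structure with no analysis needed. One minor imprecision: Assumption \ref{ass:PosF}(3) only yields $\int_{I} f>0$ for every nondegenerate interval $I$, not that $f>0$ a.e.\ (a fat Cantor set gives a counterexample); this weaker statement is, however, all you actually use in verifying property (2).
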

\begin{pf}
Let $C$ be the strictly positive, continuous function $C$ of Lem\-ma~\ref{lemPHR1}.
Let $\Pi$ be a probability measure with density function $e^{-(t-2\varrho-1)}$
on $[2\varrho+1,\infty)$. For each $y\in\cal Y$ and $B\subset{\cal Z}$,
define
\[
T(y,B)\doteq
C(y)e^{\alpha+2\varrho+1}\int_{\alpha+2\varrho+1}^\infty e^{-s}\,dF(s)
\int_0^\infty \bigl(1-F(t)\bigr)\indd_{\Gamma_B}(t)e^{-t}\,dt,
\]
and
$T(y,{\cal Y}\setminus{\cal Z})=0$. It is easy to see that for any Borel
measurable set $B \in{\cal B}(\cal Y)$, $T(y,B) = T(y,B \cap\cal Z)$ and
$T(\cdot, B)$ is continuous. Moreover, $T(y,\cal Y)=T(y,\cal Z)>0$.
Now, fix $y\in\cal Y$ and $B\in{\cal B}({\cal Y})$. By Lemma
\ref{lemPHR1}, we have
\begin{eqnarray*}
&&{\cal K}_\Pi(y,B) \\
&&\qquad= \int_0^\infty\PP_y\bigl(Y(t)\in
B\bigr)e^{-(t-2\varrho-1)} \,dt \\
&&\qquad\geq\int_{2\varrho+1}^\infty
\PP_y\bigl(Y(t)\in B\cap{\cal Z}\bigr)e^{-(t-2\varrho-1)} \,dt \\
&&\qquad \geq
\int_{2\varrho+1}^\infty C(y)
\biggl(\int_{\alpha+2\varrho+1}^{\alpha+t} \indd_{\Gamma_{B\cap {\cal
Z}}}(\alpha+t-s)\bigl(1-F(\alpha+t-s)\bigr) \,dF(s) \biggr)\\
&&\qquad\quad\hspace*{23.4pt}{}\times e^{-(t-2\varrho-1)}\,dt  \\
&&\qquad = C(y)e^{\alpha+2\varrho+1}
\int_{\alpha+2\varrho+1}^\infty e^{-s}\,dF(s)
\int_0^\infty \bigl(1-F(t)\bigr)\indd_{\Gamma_{B\cap {\cal Z}}}(t)e^{-t}\,dt\\
&&\qquad= T(y,B\cap{\cal Z})=T(y,B).
\end{eqnarray*}
Thus we have proved the lemma.
\end{pf}
\begin{theorem} \label{thmPHR}
The Markov process $Y$ is positive Harris recurrent.
\end{theorem}
\begin{pf}
Lemma \ref{lem-T} shows that $Y$ is a so-called $T$ process
(cf. Section~3.2 of \cite{MT}) and Lemma \ref{lemYirreducible}
shows that $Y$ is $\psi$-irreducible. Now, Theorem 3.2 of \cite{MT} states
that any $\psi$-irreducible $T$ process $Y$ is
positive Harris recurrent if $Y$ is bounded in probability on average, that
is, for each $y\in\cal Y$ and $\varepsilon>0$, there exists a compact set
$B\in{\cal B}({\cal Y})$ such that
\[
\liminf_{t\ra\infty}\frac{1}{t}\int_0^t
\PP_y\bigl(Y(s)\in B\bigr) \,ds \geq1-\varepsilon.
\]
However, this is satisfied
by the state process $Y$ due to
Lemma \ref{lemLtight}. So we have the desired result.
\end{pf}
\end{appendix}

\section*{Acknowledgment}

We would like to thank Haya Kaspi for observing that the state process
would not be Feller if $\alpha_E$ were chosen to be the forward, rather
than the backward, recurrence time.


%

%
\printaddresses

\end{document}